\theoremstyle{plain}
  \newtheorem{theorem}{Theorem}[section]
  \newtheorem{corollary}[theorem]{Corollary}
  \newtheorem{proposition}[theorem]{Proposition}
\theoremstyle{definition}
\theoremstyle{remark}
\numberwithin{equation}{section}
\newcommand{\bbP}{{\mathbb P}}
\newcommand{\bbR}{{\mathbb R}}
\newcommand{\opunit}{\text{1}\kern-0.22em\text{l}}
\newcommand{\frm}{{\mathfrak m}}
\newcommand{\frS}{{\mathfrak S}}
\DeclareMathAlphabet{\mathpzc}{OT1}{pzc}{m}{it}
\newcommand{\fig}{Fig.\;}
\newcommand{\rel}{\,|\,}
\newcommand{\id}{\textrm{d}}
\begin{document}
\title{On the Poisson equation for nonreversible Markov jump processes}
\author{Faezeh Khodabandehlou} \affiliation{Department of Physics and Astronomy, KU Leuven, Belgium} \author{Christian Maes} \affiliation{Department of Physics and Astronomy, KU Leuven, Belgium}	\email{faezeh.khodabandehlou@kuleuven.be}
\author{Karel Neto\v{c}n\'{y}}
\affiliation{Institute of Physics, Czech Academy of Sciences, Prague, Czech Republic}
\keywords{Poisson equation, Markov jump process, first-passage times,  Matrix Forest Theorem}

\begin{abstract}
We study the solution $V$ of the Poisson equation $LV + f=0$ where $L$ is the backward generator of an irreducible (finite) Markov jump process and $f$ is a given centered state function.  Bounds on $V$ are obtained using a graphical representation derived from the Matrix Forest Theorem and using a relation with mean first-passage times.  Applications include estimating time-accumulated differences during relaxation toward a steady nonequilibrium regime.
\end{abstract}
\maketitle

\section{Introduction}
A fascinating part of mathematical physics concerns the connection between certain (partial) differential equations and stochastic processes, \cite{Doob}.  Famous examples include the heat equation which is associated to Brownian motion, and the telegraph equation which connects with run-and-tumble dynamics.  As a general technique, Feynman-Kac formul{\ae} provide a relation between the (semi)group kernel generated by a (quantum) Hamiltonian and a stochastic representation, aka path-integral, allowing perturbative analysis and diagrammatic developments. Summing over walks or paths and their higher-dimensional versions is indeed a typical subject of stochastic geometry.  The associated {\it arborification}, {\it i.e.}, the possible restriction of those sums to (spanning sets of) trees is often a major simplification.  That is the subject of the Matrix Tree and Matrix Forest Theorems, which, while mainly in linear algebra, has things to offer in stochastic analysis as well.  That is also a main theme of the present paper, viz., to give a graphical representation in terms of trees of the solution of Poisson equations in the context of Markov jump processes.\\

A second and  related subject is to connect solutions of the Poisson equation with mean first-passage times.  The study of first-passage times comes with a useful intuition, and their behavior is obviously strongly tied to graph properties, \cite{redner}. In the end, that connection with mean first-passage times combined with graphical representations  allows to give estimates on the solution of the Poisson equation.  We refer to \cite{posap1, posap2, posap3} for the case of diffusions.  Such bounds are relevant for a number of applications, including the low-temperature properties of excess heat and of relaxational behavior, and all that within the context of finite Markov jump processes.  In that respect, we emphasize that the techniques of the present paper apply outside the traditional context of reversibility, and indeed are motivated by problems in nonequilibrium statistical mechanics.\\

 In the next section, the  setup within the framework of the  Markov jump process  is introduced.\\
  The Poisson equation is presented, in various forms, in Section \ref{poisson}, where we focus on relations between different concepts relating quasipotentials with mean first-passage times.\\
 In Section \ref{graph}, we give the graphical representation of solutions of the various Poisson equations.  The main graphical representations are formulated as  Theorem~\ref{th1} and Theorem~\ref{th2}.\\  Section \ref{bou} is an application of these relations and graphical representations. It gives bounds on the solutions of the Poisson equation.  We mention there an application for nonequilibrium thermal physics when the Markov jump process depends on parameters such as the inverse temperature.  We have three main estimates presented in inequalities \eqref{bb}, \eqref{bf}, and \eqref{bound}.\\
The Appendix presents the broader context of the graphical representations and in particular Appendix \ref{broa} enters into more mathematical details related to the Matrix Forest Theorem. 

\section{Setup}\label{mar}
Given a finite set $K$ of states $x,y,z,\ldots \in K$, we consider a Markov jump process $X_t\in K$
with transition rates $k(x,y), x\neq y$, for the jump $x\rightarrow y$. We speak about $X_t$ as the position of a random walker at time $t\geq 0$, but obviously, from a physics perspective, the states $x\in K$ do not need to model positions but can correspond to many-body configurations.\\
It naturally gives rise to a finite digraph, {\it i.e.}, an ordered pair of sets  $G =(\cal V, \cal E)$, where the vertex set $\cal V = K$ and $\cal E$ is the set of ordered pairs (arcs) $(x,y)$ of vertices $x,y\in V$ for which $k(x,y) >0$.  We assume  that the resulting graph $G$ is irreducible (strongly connected).  The latter implies the exponentially fast convergence, for time $t\uparrow \infty$,
\begin{equation}\label{cu}
\langle h(X_t)\rangle_x := \langle h(X_t) \,|\, X_0=x\rangle \longrightarrow \langle h\rangle^s =: h^s
\end{equation}
of expectations in the Markov process for every real-valued function $h$ on $K$ and independent of the initial condition $x$, toward the stationary expectation
\[
h^s =\sum_x h(x)\rho^s(x), \qquad \rho^sL =0
\]
for the unique stationary probability distribution $\rho^s>0$, and with $L$ the backward generator having matrix elements
\begin{align}\label{Lel}
 L_{x,y} & =k(x,y),\quad x\neq y\notag \\
 L_{x,x}&=-\sum_y k(x,y)
 \end{align}
In other words,
\begin{align}\label{Lf}
Lh\, (x)=\sum_y k(x,y)\,[h(y)-h(x)]
 \end{align}
gives rise to the semigroup $e^{L \,t}, t \geq 0$, with
 \begin{align}\label{Lsemi}
e^{L \,t} h(x)=\left \langle h(X_t)|X_0=x\right \rangle,\qquad \langle e^{tL}h\rangle^s = \langle h\rangle^s 
 \end{align}
as appears in \eqref{cu} as well.

 \section{Poisson equation}\label{poisson}
 By {\it Poisson equation} (in the above setup) we mean in general an equation for a real-valued function $U$ on $K$, solving
 \begin{equation}\label{genp}
     LU(x) + f(x)=0, \quad x\in H;\qquad U(x) = g(x),\quad x\notin H
 \end{equation}
 for a given subset $H\subset K$ and given functions $f: H \rightarrow \bbR,\,\, g: K\setminus H \rightarrow \bbR$.
 
 \subsection{Quasipotential}
 As a special but important case, we can consider $H =K$, where we deal with the Poisson equation
\begin{equation}\label{prop}
     LV\,(x) + f(x) =0 ,\quad  x\in K
 \end{equation}
 writing then $V$ instead of $U$, and we call $V$ the quasipotential with given source $f$, where we must require $\langle f\rangle^s = f^s = 0$ to have a solution. The solution to \eqref{prop} is unique up to an additive constant.  If we require that $\langle V\rangle^s=0$, then, clearly,
 \begin{equation}\label{exc}
 V(x) = \int_0^\infty \id t\,e^{tL}f\,(x), \quad x\in K
\end{equation}
which can be viewed as the accumulated excess of \eqref{cu}, in the sense that,
\begin{equation}\label{vint}
V(x) = \int_0^\infty \id t\,[ \,\langle h(X_t) \,|\, X_0=x\rangle - \langle h\rangle^s\,]  
\end{equation}
for $f=  h - \langle h\rangle^s$.\\
In the case where $f = LE$ for some potential function $E$, we have $V = E - \langle E\rangle^s$, which is one reason to call, more generally, the solution \eqref{vint} $V$ of \eqref{prop} a quasipotential. Obviously, the physical interpretation of $V$ in \eqref{vint} strongly depends on the meaning of $h$.  In the case where $h(x)$ is the expected instantaneous heat flux when the state is $x$, the quasipotential is closely related to the so-called excess heat; see \cite{jchemphys,mathnernst}.

\subsection{Stopped accumulation}\label{sae}
As an immediate generalization and for nonempty $H$, we introduce the random escape time 
\[
T_H := \inf\{t\geq 0, X_t \notin H\},\qquad T_H=0 \,\text{ if  }\; X_0 \notin H
\]
and consider the expected accumulation till that (random) stopping time:
\begin{eqnarray}\label{accum}
V_H(x) = \left\langle \phi_H \,|\,X_0=x\right\rangle; \qquad \qquad \phi_H(x):=\int_0^{T_H} \id t\,f(X_t)
\end{eqnarray}
The function $V_H$ is the unique solution of the Poisson equation
\begin{eqnarray}\label{tauH}
     LV_H(x) + f(x) = 0, \quad x\in H; \qquad \quad  V_H(x) = 0, \quad x\notin H
 \end{eqnarray}
 That reduces to \eqref{prop} when $H=K$.\\
 For completeness, we give the proof of \eqref{tauH}.
 \begin{proof}[Proof of \eqref{tauH}]
Define the stopped process which quits running after escaping from $H$:
\begin{equation}
	X_H(t) = 
	\begin{cases}
		X(t) & \text{for } t \leq T_H
		\\
		X(T_H) & \text{for } t > T_H
	\end{cases}
\end{equation}
It is again a Markov process with modified transition rates
\begin{equation}\label{rate-mod}
	k_H(x,y) = 
	\begin{cases}
		k(x,y) & \text{for } x \in H,\, y \in K
		\\
		0 & \text{for } x \not\in H
	\end{cases}
\end{equation}
and the corresponding generator is denoted by $L_H$. That process is not ergodic and its (in general nonunique) stationary distributions have support in 
$K \setminus H\neq \emptyset$. 
We have
\begin{equation}
\begin{split}
\bbP(T_H > t \rel X(0) = x) &=
\bbP(X_H(t) \in H \rel X_H(0) = x)
\\
&= (e^{t L_H} 1_H)(x)
\end{split}
\end{equation}
By irreducibility of the original process, the largest eigenvalue of $L_H$ (which is real by the Perron-Frobenius theorem) is strictly negative, and hence 
$T_H$ has exponentially tight distribution, 
$\bbP(T_H > t) \leq e^{-c t}$ for some $c > 0$ and $t$ large enough, uniformly with respect to the initial condition.  It means that any distribution in $H$ will vanish in the limit $t\uparrow \infty$.  As a result, the function $f_H(x) = f(x) 1_H(x)$ (where $1_H$ is the indicator for the set $H$) can be integrated to obtain \eqref{accum}:
\[
V_H(x) = \int_0^\infty e^{tL_H}f_H(x)\,\id t
\]
or 
\[
L_HV_H(x) = -f_H(x)
\]
which is \eqref{tauH}.
\end{proof}

\subsection{First-passage times}
As a special case we take $f=1$ in the above \eqref{accum}, to introduce the mean escape time $\frS_H(x)$ from $H$ when started from $x$:
\begin{equation}
    \frS_H(x) = \langle T_H\,|\,X_0=x\rangle
\end{equation}
It satisfies
\begin{eqnarray}\label{taupH}
     L\,\frS_H(x) + 1 = 0, \quad x\in H;\qquad\quad  \frS_H(x) = 0, \quad x\notin H
 \end{eqnarray}
For $x \not\in H$ we have
\begin{equation}
	L\frS_H\, (x) = \sum_{y \in H} k(x,y) \frS_H(y)
\end{equation}
By combining with~\eqref{taupH} and using the stationarity $\langle L \,\frS_H \rangle^s = 0$, we get
\begin{equation}\label{sum-rule}
	\begin{split}
		\sum_{x \not\in H} \sum_{y \in H }\rho^s(x) k(x,y) \frS_H(y)
		= \rho^s(H) \,\;(=\text{Prob}^s[x\in H])
	\end{split}
\end{equation}
which relates the escape times from $H$ when starting from (inner boundary) states in 
$H$, with the expected stationary number of jumps to $H$ from outside (outer boundary). \\

As a special case, we fix a state $z\in K$ and take $H = K\setminus\{z\}$. We put $\tau(x,z) := \frS_{H}(x)$ and then from \eqref{taupH},
\begin{equation}\label{taup}
     \sum_y k(x,y)[\tau(y,z) - \tau(x,z)] +1 =0, \quad x\neq z, \qquad \tau(z,z) = 0
 \end{equation}
which is the Poisson equation characterizing the\textit{ mean first-passage} time to reach $z$ when started from some $x\in K$.\\
 
 There is a useful relation between the quasipotential, solution of the Poisson equation \eqref{prop}, and the mean first-passage times:
  \begin{proposition}\label{pro1}
\begin{equation}\label{gr}
    V(x) = -\sum_z \rho^s(z)\, f(z)\,\tau(x,z) + \text{constant}
\end{equation}
 where the additive constant is fixed, if needed, by the condition $\langle V\rangle^s=0$.\\
 \end{proposition}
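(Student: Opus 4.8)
The plan is to verify directly that the function on the right-hand side of \eqref{gr} solves the quasipotential equation \eqref{prop}, and then to invoke the uniqueness up to an additive constant stated right after \eqref{prop}. Concretely, I would set $W(x) := -\sum_z \rho^s(z)\, f(z)\, \tau(x,z)$ and compute $LW$ using linearity of $L$ together with the Poisson equation \eqref{taup} for each mean first-passage time $\tau(\cdot,z)$, so that $LW(x) = -\sum_z \rho^s(z)\, f(z)\, \big(L\tau(\cdot,z)\big)(x)$.

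The one input that needs care is the value $(L\tau(\cdot,z))(x)$ at the diagonal point $x=z$. Equation \eqref{taup} only gives $(L\tau(\cdot,z))(x) = -1$ for $x \neq z$; at $x=z$ the quantity $R(z) := (L\tau(\cdot,z))(z) = \sum_y k(z,y)\,\tau(y,z)$ is a priori unknown. I would pin it down from stationarity, $\langle L\tau(\cdot,z)\rangle^s = 0$, which is exactly the sum rule \eqref{sum-rule} for the choice $H = K\setminus\{z\}$: it yields $\rho^s(z)\, R(z) = 1 - \rho^s(z)$, hence $1 + R(z) = 1/\rho^s(z)$. Consequently $(L\tau(\cdot,z))(x) = -1 + \delta_{x,z}/\rho^s(z)$ holds uniformly in $x$.

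Substituting this into the expression for $LW$ gives $LW(x) = \sum_z \rho^s(z) f(z) - \sum_z f(z)\,\delta_{x,z} = f^s - f(x) = -f(x)$, where the last step uses the solvability condition $f^s = 0$. Thus $W$ solves \eqref{prop}, and since that equation has a unique solution up to an additive constant, $V = W + \text{const}$; fixing the constant by $\langle V\rangle^s = 0$, when needed, gives precisely \eqref{gr}.

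The only genuine obstacle is the diagonal correction: the naive reading $(L\tau(\cdot,z))(x)\equiv -1$ for all $x$ would produce $LW = f^s = 0$ rather than $-f$, so the $\delta_{x,z}/\rho^s(z)$ term — and with it the stationary weight $\rho^s(z)$ appearing in \eqref{gr} — is essential, and it is exactly what the stationarity identity \eqref{sum-rule} supplies. A longer alternative would be a regeneration argument applied to the path representation \eqref{vint}, splitting trajectories at successive returns to $z$ and recognizing $\rho^s(z)$ as the inverse mean return time; but the generator computation above is shorter and self-contained given what has already been established in this section.
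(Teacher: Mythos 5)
Your proposal is correct and follows essentially the same route as the paper: both verify that $-\sum_z \rho^s(z) f(z)\tau(x,z)$ solves \eqref{prop} by using the stationarity sum rule \eqref{sum-rule} with $H=K\setminus\{z\}$ to pin down the diagonal value, i.e.\ to establish $L\tau(\cdot,z)(x) = \delta_{x,z}/\rho^s(z) - 1$ (the Green-function identity the paper writes as $L\,\frS_H = g_z - \langle g_z\rangle^s$), and then conclude via linearity, centering of $f$, and uniqueness up to an additive constant.
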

 As a consequence,
 \begin{equation}\label{difv}
    V(x)-V(y)= \sum_z \, \rho^s(z) \, f(z)(\tau (y,z) -\tau (x,z) )
\end{equation}
  \begin{proof}[Proof of Proposition \ref{pro1}]
We show that $V$ in \eqref{gr} is satisfying the Poisson equation \eqref{prop}. Keep $H = K\setminus\{z\}$; the identity \eqref{sum-rule} reads
\begin{equation}\label{hit-sum}
	\begin{split}
		\rho^s(z) \sum_x k(z,x) \,\frS_{H}(x) = 1 - \rho^s(z)
	\end{split}
\end{equation}
Therefore,
 \begin{equation}
     L\,\frS_{H}(x) = g_z(x) - \langle g_z\rangle^s,\qquad\text{ for } \,\; g_z(x) := \frac{\delta_{x,z}}{\rho^s(z)}
 \end{equation}
 valid for all $x\in K$, which means that the $\tau(x,z)$'s of \eqref{taup} play the role of Green functions for the Poisson equation.  In particular, since always $f(x) = \sum_z\rho^s(z) f(z) g_z(x)$, we obtain \eqref{gr} from the following calculation
 \begin{equation}
	\begin{split}
			LV\, (x)&= L\,[-\sum_z \rho^s(z)\, f(z)\,\tau(x,z) + \text{constant}]\\
   &= -\sum_z \rho^s(z) f(z) L \,\frS_{H}(x) 
			\\
			&= -\sum_z \rho^s(z) f(z) [g_z(x) - 1]
			\\
			&= f^s - f(x)
	\end{split}
 \end{equation}
which finishes the proof.  
 \end{proof}

\section{Graphical representations}\label{graph}
We next turn to a graphical representation of the solution of the Poisson equations \eqref{genp}--\eqref{prop}--\eqref{taup}. To be self-consistent and as a natural introduction to the Matrix Forest Theorem, if only for notation, we still remind the reader of the Kirchhoff formula; see  \cite{hb, intro} for more explanations and examples.\\

 \subsection{Kirchhoff formula}\label{seckir}
Remember that the Markov jump process, via its transition rates, has defined a  digraph  $G\,(\mathcal{V}(G), \mathcal{E}(G))$.\\
 $ H$ is a \textit{subgraph} of $ G$ if $\cal V( H)\subseteq \cal V( G)$ and $ \cal E(H)\subseteq \cal E( G)$. The subgraph is \textit{spanning} if $\cal V( H)=\cal V( G)$.\\
A \textit{tree} in $G$  is defined as a connected subgraph of $G$ that does not contain any cycles or loops.
The set of all spanning trees rooted in $x$ is denoted by   $\cal T_x$. 
\begin{proposition}\label{prokir}
    The following Kirchhoff formula gives the unique solution to the stationary Master Equation $ \rho^s L=0$ in terms of spanning trees, 
\begin{equation}\label{kir}
    \rho^s(x)=\frac{w(x)}{W}, \quad W:=\sum_y w(y),
\end{equation}
 with weights
\[
w(x)  = \sum_{T_x \in \cal T_x} w(T_x),\quad w(T_x)=\prod_{(u,u')\in T_x}k(u,u')
\]
where $\cal T_x$ is the set of all spanning trees rooted on $x$, and $T_x\in \cal T_x$ is a rooted spanning tree with arcs $(u,u')$ (directed edges).
\end{proposition}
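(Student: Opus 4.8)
The statement is Kirchhoff's formula, a form of the Markov chain tree theorem, and the plan is to verify it directly. Writing $w=(w(x))_{x\in\caV(G)}$ for the weight vector with entries $w(x)=\sum_{T_x\in\caT_x}w(T_x)$ as in the statement, I would first show that $w$ is a left null vector of $L$, i.e.\ $\sum_x w(x)\,L_{x,y}=0$ for every $y$, and then fix the normalization. By the form \eqref{Lel} of the matrix elements, $wL=0$ is equivalent to the ``global balance'' identity
\[
\sum_{x\,:\,x\neq y} w(x)\,k(x,y)\;=\;w(y)\sum_{z\,:\,z\neq y}k(y,z),\qquad y\in\caV(G),
\]
so the whole problem reduces to proving this identity, after which positivity and normalization finish the proof.

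The identity I would establish by a cycle--deletion bijection, adopting the convention that any subgraph is weighted by the product $\prod_{(u,u')}k(u,u')$ of rates over its arcs (a subgraph using a non-arc then has weight zero and is discarded). Expanding the left side, $w(x)\,k(x,y)=\sum_{T_x\in\caT_x}w\bigl(T_x\cup\{(x,y)\}\bigr)$; since the root $x$ has out-degree $0$ in $T_x$, the graph $F:=T_x\cup\{(x,y)\}$ is weakly connected with every vertex of out-degree exactly one, and following out-arcs from $x$ shows that its unique cycle is the $T_x$-path from $y$ back to $x$ closed by the arc $(x,y)$ --- in particular $y$ lies on the cycle. Conversely, from any weakly connected spanning subgraph $F$ with all out-degrees one and $y$ on its (necessarily unique) cycle, deleting the arc $(x,y)$ that enters $y$ along the cycle yields a spanning tree rooted at $x$. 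This correspondence is weight-preserving, so the left side equals $\sum_{F\in\caF_y}w(F)$, where $\caF_y$ denotes this family of ``unicyclic'' spanning subgraphs whose cycle passes through $y$. Running the same argument on the right side --- $w(y)\,k(y,z)=\sum_{T_y\in\caT_y}w\bigl(T_y\cup\{(y,z)\}\bigr)$, now deleting the arc that leaves $y$ along the cycle --- identifies the right side with the same sum $\sum_{F\in\caF_y}w(F)$, which gives the identity. (A less combinatorial route: by the matrix--tree theorem the principal minor $\det\bigl((-L)^{(x,x)}\bigr)$ equals $w(x)$, and because $-L$ has zero row sums its right kernel is one-dimensional, spanned by $\mathbf{1}$, so every column of $\operatorname{adj}(-L)$ is constant and $\operatorname{adj}(-L)=\mathbf{1}\,w^{\mathsf T}$; the adjugate identity $\operatorname{adj}(-L)(-L)=\det(-L)\,I=0$ then gives $w^{\mathsf T}L=0$.)

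Finally I would record positivity and pin down the constant. Strong connectedness of $G$ guarantees that for every $x$ there is at least one spanning tree rooted at $x$ (for instance a shortest-path in-tree directed toward $x$), so $w(x)$ is a nonempty sum of products of strictly positive rates and hence $w(x)>0$. Since $wL=0$ with $w>0$, the vector $w/W$ with $W=\sum_y w(y)$ is a stationary probability distribution, hence coincides with the unique $\rho^s$ from Section~\ref{mar}, which is exactly \eqref{kir}. The one genuinely delicate point is setting up the cycle--deletion bijection cleanly (or, on the alternative route, supplying the matrix--tree identity); the positivity and normalization steps are routine given the setup of Section~\ref{mar}.
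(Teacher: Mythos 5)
Your proof is correct and follows essentially the same route as the paper's own argument (Appendix \ref{rki}): verifying the stationary master equation $wL=0$ by a weight-preserving edge-swap bijection between rooted spanning trees, which you merely make explicit by passing through the intermediate unicyclic object $T_x\cup\{(x,y)\}$ and then adding the routine positivity and normalization steps. The parenthetical matrix-tree/adjugate route you mention is likewise the standard alternative the paper alludes to but does not carry out.
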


The usual proof of the Kirchhoff formula \eqref{kir} proceeds via verification.  In Appendix \ref{rki} we add the heuristics of a more constructive proof,  \cite{kirana}.

\subsection{Graphical representation of the solution of the Poisson equation}

A (spanning) \textit{forest} in graph $G$ is a set of trees. For the moment we are only  concerned with two-trees, {\it i.e.}, forests that consist of exactly two trees with the understanding that a single vertex also counts as one tree.\\ 
Let $\cal F^{x\to y}$ denote  the set of all   rooted spanning forests with two trees, where $x$ and $y$ are in the same tree rooted at $y$. The second tree is rooted as well, and we include all possible orientations. The two-trees, elements of $\cal F^{x\to y}$, are denoted by $F^{x\to y}$. The weight of the set
\begin{equation}\label{ww}
w(x \to y) := w(\cal F^{x\to y}) = \sum_{F^{x\to y} \in \cal F^{x\to y} }w(F^{x\to y}), \quad w(F^{x\to y}) = \prod_{(u,u')\in F^{x\to y}} k(u,u')
\end{equation}
We write $\cal F^{x\to x} = \cal F^{x}$; see \fig\ref{exnotation}.
Moreover, let $\cal F^{x,\,y}$ be the set of all (spanning) forests consisting of two trees such that $x$ and $y$ are located in different trees and $y$ is a root; see \fig\ref{exnotation} where $\cal F^{x,\,y}=\cal F^{y}\setminus \cal F^{x\to y}$.\\ 

\begin{figure}[h!]
    \centering
    \includegraphics[scale=0.8]{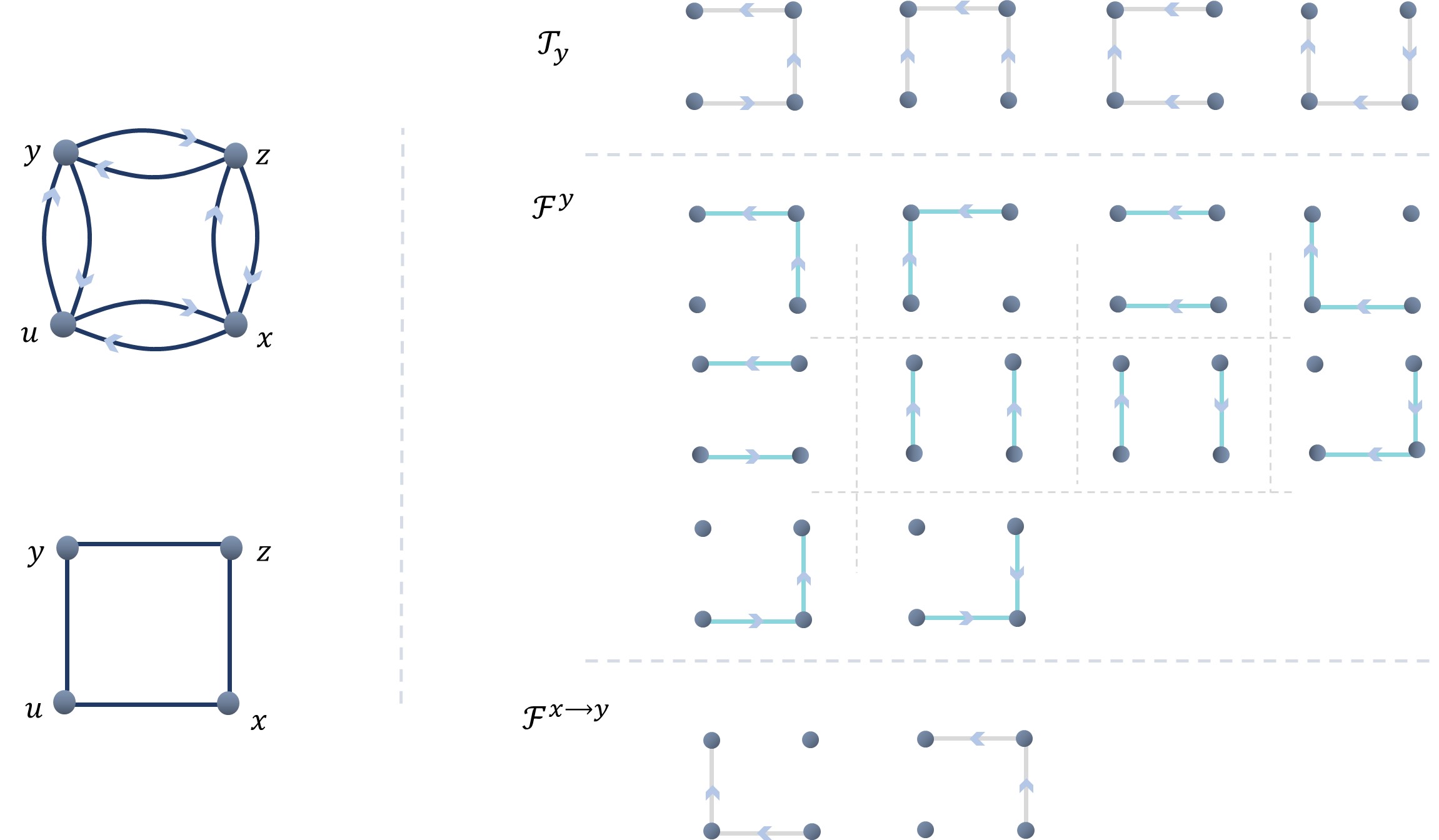}
    \caption{\small {A digraph with four vertices (top left) and its simple visualization (down left). Right: from the top onward, the elements of  the sets  $\cal T_y$, $\cal F^y$ and $\cal F^{x\to y}$ are shown.}}
    \label{exnotation}
\end{figure}

Recall the Poisson equation \eqref{prop} for the quasipotential $V$, where $f$ is a centered function on $K$. 
\begin{theorem}\label{th1}
The solution to \eqref{prop} for $\langle V\rangle^s=0$ is
 \begin{equation}\label{V}
    V(x)=\sum_y\frac{  w(x\to y)}{W}\,f(y)
\end{equation}
\end{theorem}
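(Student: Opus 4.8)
The plan is to verify directly that the candidate function $V(x) = \sum_y \frac{w(x\to y)}{W} f(y)$ satisfies the Poisson equation $LV(x) + f(x) = 0$, using the combinatorial structure of the two-tree forests together with the Kirchhoff formula \eqref{kir}. The key is to understand how applying $L$ in the variable $x$ interacts with the forest weights $w(x\to y)$. First I would fix a root vertex $y$ and study the function $x \mapsto w(x\to y)$, recalling that $\cal F^{x\to y}$ consists of spanning two-tree forests in which $x$ lies in the tree component rooted at $y$, with the second tree rooted arbitrarily. I want to show that $w(\cdot \to y)$ is, up to normalization, essentially a Green's function: more precisely, I expect an identity of the form $L\,w(\cdot\to y)(x) = w(y)\,\delta_{x,y} - (\text{something proportional to } w(y))$ or, in view of Proposition~\ref{pro1}, that $w(x\to y)/W$ is directly tied to the mean first-passage quantity $\rho^s(y)\tau(x,y)$.

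Indeed, the cleanest route is probably to connect \eqref{V} to the already-proven Proposition~\ref{pro1} rather than to rederive everything from scratch. By \eqref{gr}, the solution with $\langle V\rangle^s=0$ is $V(x) = -\sum_z \rho^s(z) f(z)\tau(x,z) + c$. Comparing with \eqref{V} and using the Kirchhoff formula $\rho^s(z) = w(z)/W$, it suffices to establish the forest identity
\begin{equation}\label{keyid}
\frac{w(x\to y)}{W} = -\frac{w(y)}{W}\,\tau(x,y) + \gamma(y)
\end{equation}
for some function $\gamma$ depending on $y$ but not $x$, and then check that the additive term $\sum_y \gamma(y) W^{-1} W \cdot$ (no — rather $\sum_y \gamma(y) f(y)$, but wait, we need the $\langle V\rangle^s=0$ normalization to pin it down). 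So the two sub-tasks are: (i) prove \eqref{keyid}, identifying $w(x\to y)$ with a spanning-two-tree expression for $\tau$; and (ii) check the normalization $\langle V\rangle^s = 0$, i.e. $\sum_x \rho^s(x) \sum_y \frac{w(x\to y)}{W} f(y) = 0$, which should follow from a symmetry or sum rule for $\sum_x w(x)\, w(x\to y)$ combined with $\langle f\rangle^s = 0$.

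For sub-task (i), the standard fact (a consequence of the all-minors Matrix Tree / Matrix Forest Theorem) is that the mean first-passage time has the representation $\tau(x,y) = \frac{1}{w(y)}\sum_{z} w(z\to y; \text{excluding } x) \cdot (\dots)$ — more precisely, $\tau(x,y)$ equals a ratio of forest sums in which one counts two-tree forests where the $y$-rooted tree does \emph{not} contain $x$ (these are the forests in $\cal F^{x,y}$). The relation $\cal F^y = \cal F^{x\to y} \sqcup \cal F^{x,y}$ noted in the text right before the theorem is exactly the combinatorial decomposition I want: $w(x\to y) = w(\cal F^y) - w(\cal F^{x,y})$, and $w(\cal F^y)$ is $x$-independent. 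So \eqref{keyid} reduces to showing $w(\cal F^{x,y})/W$ is proportional (with $x$-independent constant of proportionality in the sense of the $y$-slice) to $\tau(x,y)$, which is precisely the matrix-forest expression for first-passage times.

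The main obstacle I anticipate is sub-task (i): pinning down the exact matrix-forest formula for $\tau(x,y)$ and matching normalizations and orientations — keeping straight which trees are rooted where, which arcs point toward which root, and the precise constants relating $W$, $w(y)$, and the two-tree weights. This is the kind of bookkeeping that the Matrix Forest Theorem handles but that is easy to get off by a factor. An alternative, more self-contained approach avoiding Proposition~\ref{pro1} would be to apply $L$ termwise to \eqref{V}: write $LV(x) = \sum_y \frac{f(y)}{W}\sum_{x'} k(x,x')[w(x'\to y) - w(x\to y)]$, then interpret $\sum_{x'} k(x,x') w(x'\to y)$ as a weighted sum over forests with an extra arc out of $x$, and use a deletion–contraction / telescoping argument on forests to collapse this to $w(y)(\delta_{x,y} - \rho^s(\cdot))$-type terms; combined with $\langle f\rangle^s=0$ this yields $LV(x) = -f(x)$. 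Either way, the heart of the proof is a purely graph-combinatorial lemma about how the generator acts on two-tree forest weights, and that lemma — rather than the surrounding analysis — is where the real work lies; I would isolate it as a separate lemma (it is in fact the content of Appendix~\ref{broa}) and cite it here.
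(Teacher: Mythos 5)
Your plan is viable and genuinely different from the paper's proof. The paper verifies Theorem~\ref{th1} directly: it writes $LV(x)$ as a difference of two-tree forest weights and cancels terms by an explicit bijection (attach the arc $(x,y)$ to a forest; if $x$ is not a root the resulting object is counted twice with opposite signs, if $x$ is a root one obtains a spanning tree), after which the Kirchhoff formula \eqref{kir} and $\langle f\rangle^s=0$ give $LV=-f$. You instead reduce \eqref{V} to Proposition~\ref{pro1} through the decomposition $w(x\to y)=w({\cal F}^{y})-w(x,y)$, with $w({\cal F}^{y})$ independent of $x$, together with the forest formula $\tau(x,y)=w(x,y)/w(y)$; then \eqref{V} agrees with \eqref{gr} up to an additive constant and hence solves \eqref{prop}. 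This is not circular, because that forest formula is exactly Theorem~\ref{th2} (equation \eqref{fpt}), which the paper proves later but independently of Theorem~\ref{th1}, and it is also classical via the all-minors Matrix Forest Theorem. What your route buys is economy: granted Theorem~\ref{th2}, Theorem~\ref{th1} becomes a short corollary of Proposition~\ref{pro1}. What it does not buy is any saving of combinatorial work: if you must prove rather than cite \eqref{fpt}, the required argument is the same add-an-arc, root-versus-non-root bijection that the paper uses for Theorem~\ref{th1} itself, so the work is merely relocated.

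Two caveats. Your sub-task (ii) is genuinely needed and not yet an argument: to conclude that \eqref{V} is the \emph{centered} solution (and to pin down your $\gamma$) you need the sum rule $\sum_x \rho^s(x)\,w(x\to y)=\lambda\,w(y)$ with $\lambda$ independent of $y$; this is true (one can get it, e.g., from $\rho^s L^{\#}=0$ together with \eqref{Glaga1}, giving $\lambda=w({\cal F})/W$), but it deserves a proof rather than the phrase ``should follow from a symmetry.'' (The paper's in-text verification also omits the check $\langle V\rangle^s=0$; only the resolvent proof in Appendix~\ref{broa} yields centering automatically.) Also, your ``alternative self-contained approach'' is essentially the paper's own proof in Section~\ref{graph}, not the content of Appendix~\ref{broa}: the appendix derives \eqref{V} from the Matrix Forest Theorem applied to the resolvent $(I+\alpha{\cal L})^{-1}$, not from the telescoping-on-forests lemma you sketch.
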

We proceed with the proof by direct verification.  An alternative proof, starting from a more general setup and using the Matrix Forest Theorem, is presented in Appendix \ref{broa}.
\begin{proof}[Proof of Theorem \ref{th1}]
\begin{align*}
    LV\, (x) &=\sum_yk(x,y)\,[V(y)-V(x)]\\
    &=\frac{1}{W}\,\sum_y k(x,y) \,\sum_z[w(y \to z)- w(x \to z)]\, f(z)
\end{align*}
where $w(y \to z)=w(\cal F^{y\to z} )$ and $w(x \to z)=w(\cal F^{x\to z})$ as in \eqref{ww}. The intersection of  $\cal F^{y\to z}$ and $\cal F^{x\to z}$ is the set of all forests where $x$ and $y$ are located in  the same tree. Here, to emphasize the positions $x$ and $y $, we use the notation $\cal F^{x,y\to z}$ to denote the set of all forests where $y$ is located in the tree rooted in $z$, and $x$ is in the other tree. Then,
\begin{align}\label{lv2}
    LV\, (x) &=\frac{1}{W}\,\sum_u \sum_y k(x,y) \,[w(\cal F^{x,y\to z})- w(\cal F^{y,x\to z})]\, f(z)
\end{align}
Fix $z\not=x$, take a forest  $F^{x,y\to z}\in \cal F^{x,y\to z}$ and  add the edge $(x,y)$ to that forest: one tree is rooted in $z$, and corresponding to the root of the second tree  there are two cases.   Consider the case that $x$ is not the root, then there exists an edge   $(x,y')$ on the tree. Remove the edge $(x,y')$ from the graph $(x,y) \cup F^{x,y\to z}$, a new forest is made. The new forest is in the set $F^{y',x\to z}$; see \fig\ref{f1}.

\begin{figure}[H]
     \centering
     \begin{subfigure}{0.49\textwidth}
         \centering
         \def\svgwidth{0.8\linewidth}        
         \includegraphics[scale = 0.7]{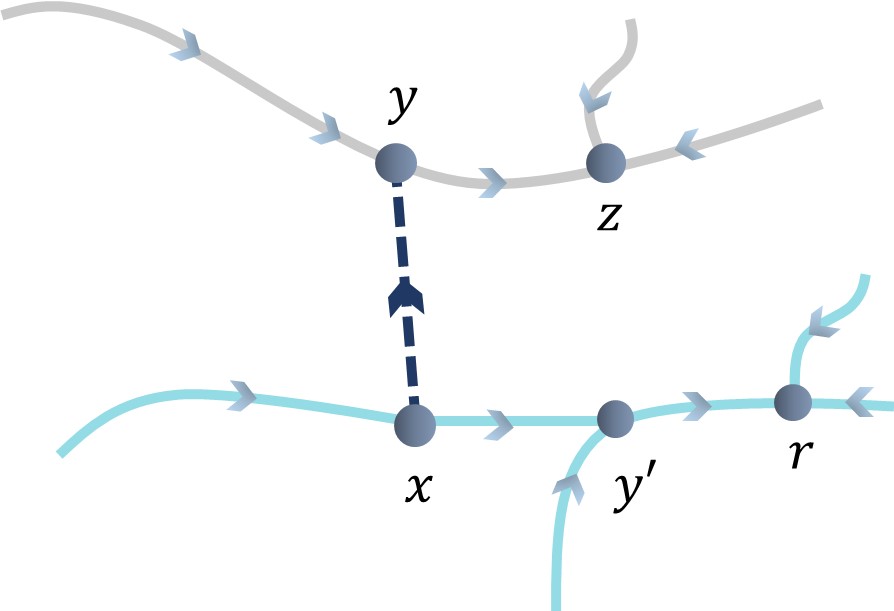}
         \caption{$(x,y)\cup F^{x,y\to z} $}
     \end{subfigure}
     \hfill
     \begin{subfigure}{0.49\textwidth}
         \centering
         \def\svgwidth{0.8\linewidth}        
\includegraphics[scale = 0.7]{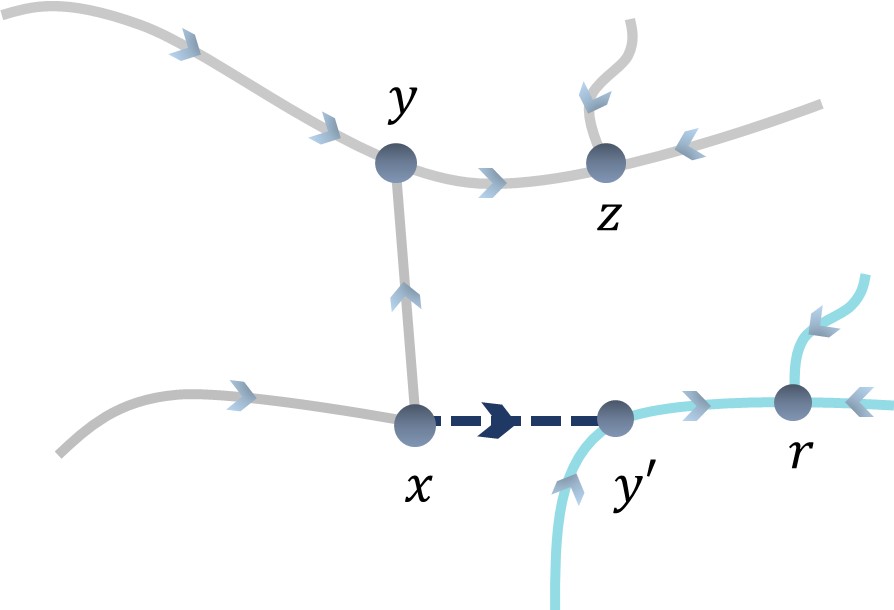}
         \caption{$(x,y')\cup F^{y',x\to z} $}
     \end{subfigure}
        \caption{\small{  Adding  $(x,y)$ to  $F^{x,y\to z}   $ and  the edge $(x,y')$ to  $F^{y',x\to z}$, make the same graphs. }}\label{f1}
\end{figure}

 The same scenario is true for the set $\cal F^{y,x\to z}$, see \fig\ref{f2}.
\begin{figure}[H]
     \centering
     \begin{subfigure}{0.49\textwidth}
         \centering
         \def\svgwidth{0.8\linewidth}        
         \includegraphics[scale = 0.7]{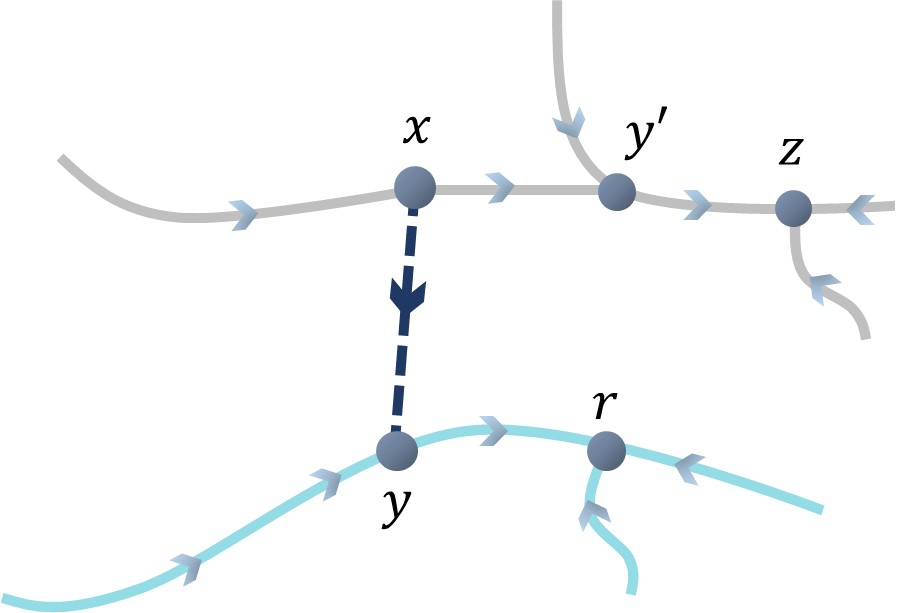}
         \caption{$(x,y)\cup F^{y,x\to z} $}
     \end{subfigure}
     \hfill
     \begin{subfigure}{0.49\textwidth}
         \centering
         \def\svgwidth{0.8\linewidth}        
\includegraphics[scale = 0.7]{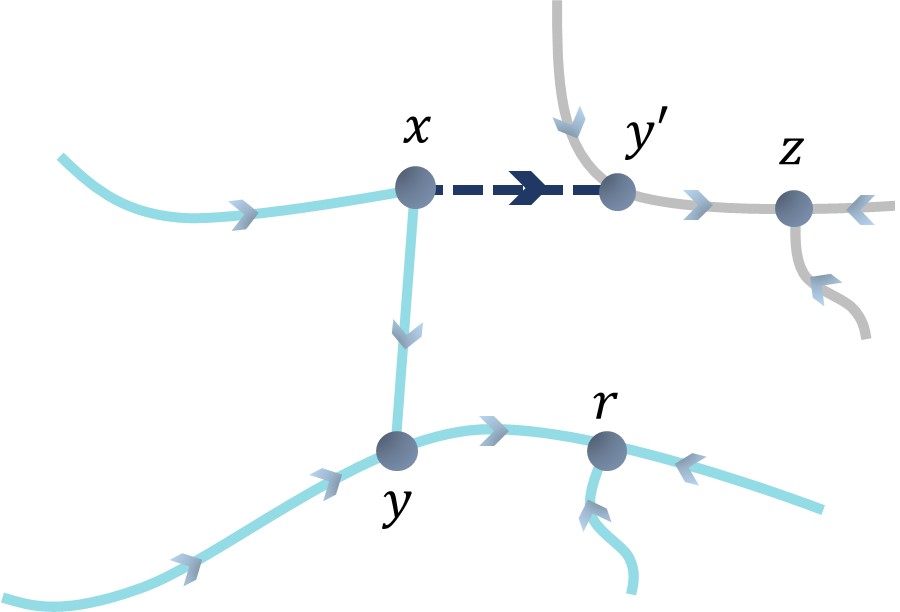}
         \caption{$(x,y')\cup F^{x,y'\to z} $}
     \end{subfigure}
        \caption{\small{  Adding  $(x,y)$ to  $F^{y,x\to z}   $ and  the edge $(x,y')$ to  $F^{x,y'\to zu}$, make the same graphs. }}\label{f2}
\end{figure}
Hence, when $x$ is not the root, \eqref{lv2} equals zero. \\
Consider a forest from  $\cal F^{x,y\to z} $. If  $x$ is  the root of its tree, then adding the edge $(x,y)$ connects two trees and the new graph is a spanning tree rooted in $z$; see \fig\ref{f3}. 
\begin{figure}[H]
    \centering
    \includegraphics[scale=0.7]{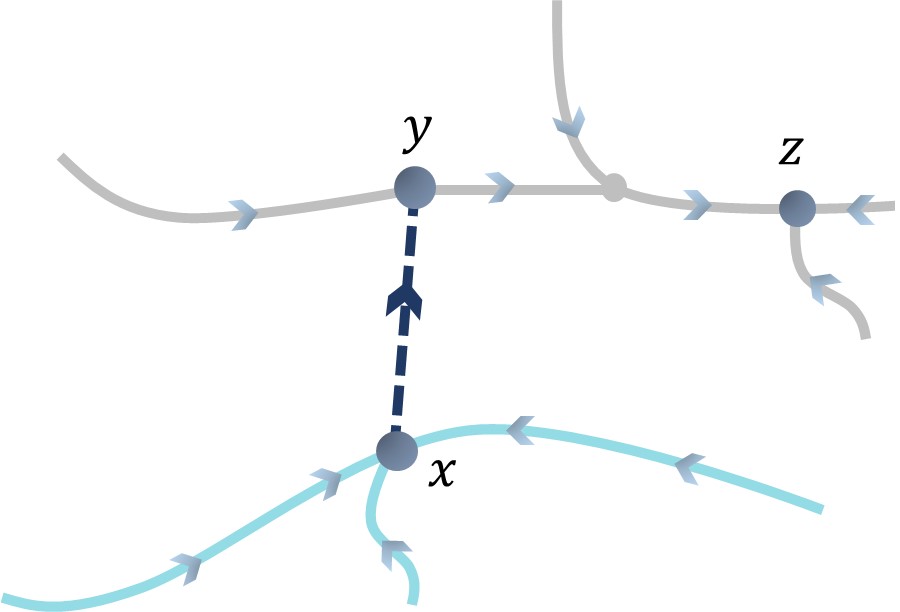}
    \caption{\small{ Adding $(x,y)$ to  the forest $F^{x,y\to z}$ where $x$ is the root makes a spanning tree rooted in $z$.}}
    \label{f3}
\end{figure}
 Put $z=x$, the set of $\cal F^{x,y\to x}$ is empty. Take a forest from the set $\cal F^{y,x\to x}$.  Adding the edge $(x,y)$ to the forest $F^{y,x\to x}$ again is a new spanning tree rooted on a vertex $r\not=x$.\\
Hence, finally,
 \begin{align*}
     LV\, (x)&=\frac{1}{W}[\sum_{z\not =x} w(z)f(z)-\sum_{r\not=x} w(r)f(x)]\\
     &=\frac{1}{W}\sum_{z\not =x} [w(z)f(z)-w(z)f(x)]\\
     &=\frac{1}{W}\sum_{z\not =x} [w(z)f(z)+w(x)f(x)-w(x)f(x)-w(z)f(x)]\\
     &=\frac{1}{W}\sum_{z} w(z)f(z) - \frac{f(x)}{W}\sum_z w(z)=\langle f\rangle-f(x)\\
     &=-f(x)
 \end{align*} 
where we used the Kirchhoff formula \eqref{kir} and the fact that $f$ is centered.
 \end{proof}

Next, recall the Poisson equation \eqref{taup} for the mean first-passage times.  There as well, we get a graphical representation.  The set $\cal F^{x,y}$ contains all two-trees with the restrictions:
\begin{itemize}
    \item vertices $x$ and $y$ are in separate trees;
    \item the tree that contains $y$ is rooted in $y$, while the second tree is rooted anywhere (containing $x$); 
    \item the weight $w(x,y) = w(\cal F^{x,y})$ of the set of such two-trees $F^{x,y}$ is
    \[
    w(x,y) = \sum_{F^{x,y} \in {\cal F}^{x,y}}
\prod_{(u,u')\in F^{x,y}} k(u,u')\]
\end{itemize}

\begin{theorem}\label{th2}
The mean first-passage time, solution to \eqref{taup},  is   
   \begin{equation}\label{fpt}
    \tau (x,z)=\frac{w(x,z)}{w(z)} \quad x\neq z;\qquad \tau(x,x)=0
\end{equation}
where $w(z)$ appeared in \eqref{kir}.
\end{theorem}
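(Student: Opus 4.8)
The plan is to verify by direct substitution that $\widehat\tau(x,z):=w(x,z)/w(z)$ solves the Poisson equation \eqref{taup} and then to conclude by uniqueness of its solution. Fix the absorbing target $z$. The boundary condition is immediate: $\cal F^{z,z}=\cal F^z\setminus\cal F^{z\to z}=\emptyset$, because $\cal F^{z\to z}=\cal F^z$ (every two‑tree forest with $z$ a root trivially has $z$ in the tree rooted at $z$), so $w(z,z)=0$ and $\widehat\tau(z,z)=0$. It remains to show $L\widehat\tau(\cdot,z)(x)=-1$ for every $x\neq z$.

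Since $\widehat\tau(y,z)-\widehat\tau(x,z)=[w(y,z)-w(x,z)]/w(z)$, formula \eqref{Lf} gives $L\widehat\tau(\cdot,z)(x)=\tfrac1{w(z)}\sum_y k(x,y)\,[w(y,z)-w(x,z)]$. The key observation is that the weights of Theorem~\ref{th2} and of Theorem~\ref{th1} live in the same universe of two‑trees: from $\cal F^{x,z}=\cal F^z\setminus\cal F^{x\to z}$ together with $\cal F^z=\cal F^{z\to z}$ and $\cal F^{x\to z}\subseteq\cal F^z$ one gets $w(x,z)=w(\cal F^z)-w(x\to z)$, hence $w(y,z)-w(x,z)=w(x\to z)-w(y\to z)$ and therefore
\[
L\widehat\tau(\cdot,z)(x)=-\frac1{w(z)}\sum_y k(x,y)\,[w(y\to z)-w(x\to z)].
\]
Thus the claim reduces to the identity $\sum_y k(x,y)\,[w(y\to z)-w(x\to z)]=w(z)$ for $x\neq z$. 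This is precisely what the forest‑surgery argument in the proof of Theorem~\ref{th1} yields: that argument shows, before invoking centeredness, that $\sum_y k(x,y)\sum_{z'}[w(y\to z')-w(x\to z')]\,f(z')=\sum_{z'}w(z')f(z')-W\,f(x)$ for an arbitrary function $f$ and arbitrary vertex $x$; taking $f=1_{\{z\}}$ and using $x\neq z$ removes the $W f(x)$ term and leaves exactly $\sum_y k(x,y)[w(y\to z)-w(x\to z)]=w(z)$. Hence $L\widehat\tau(\cdot,z)(x)=-w(z)/w(z)=-1$ for $x\neq z$, and by uniqueness of the solution of \eqref{taup} we conclude $\tau(x,z)=\widehat\tau(x,z)=w(x,z)/w(z)$.

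A second, equivalent route bypasses the internals of Theorem~\ref{th1}: from the proof of Proposition~\ref{pro1} one has $L\,\tau(\cdot,z)(x)=g_z(x)-1$ with $g_z=\delta_{\cdot,z}/\rho^s(z)$, so $-\tau(\cdot,z)$ is the quasipotential \eqref{V} for the centered source $g_z-1$; Theorem~\ref{th1} then gives $-\tau(x,z)=\tfrac{w(x\to z)}{w(z)}-\tfrac1W\sum_y w(x\to y)+\text{const}$, where $\rho^s(z)=w(z)/W$ by Kirchhoff \eqref{kir}, and since $\tfrac1W\sum_y w(x\to y)$ is the $x$‑independent total normalized weight of rooted two‑tree spanning forests, imposing $\tau(z,z)=0$ fixes the additive constant and, with $w(z\to z)-w(x\to z)=w(x,z)$, reproduces \eqref{fpt}. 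In either approach the only point requiring care is the bookkeeping of the two‑tree universe — ensuring $\cal F^{x,z}$ and $\cal F^{x\to z}$ are both viewed inside $\cal F^z=\cal F^{z\to z}$ so that their weights subtract cleanly — plus the trivial check of the degenerate case $x=z$; the genuine combinatorial work is already supplied by Theorem~\ref{th1}.
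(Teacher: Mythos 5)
Your proposal is correct. Your first route is, at its combinatorial core, the same verification as the paper's: the paper's own proof of Theorem~\ref{th2} reduces \eqref{taup} to the identity $\sum_y k(x,y)[w(x,z)-w(y,z)]=w(z)$, cancels the common forests (those with $x$ and $y$ in the same tree) and then runs exactly the root/non-root forest surgery of Figures~\ref{f1}--\ref{f3}; you reach the same identity via the complement relation $w(x,z)=w(\mathcal{F}^{z})-w(x\to z)$ and then, instead of repeating the surgery, you import it from the proof of Theorem~\ref{th1} by observing that the intermediate identity $\sum_y k(x,y)\sum_{z'}[w(y\to z')-w(x\to z')]f(z')=\sum_{z'}w(z')f(z')-Wf(x)$ is linear in $f$ and does not use centering, so $f=1_{\{z\}}$ with $x\neq z$ gives what is needed; this is legitimate (no circularity, since Theorem~\ref{th1} is proved independently) and slightly more economical, at the cost of making the reader reopen that proof to check that centeredness indeed enters only in the last line. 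Your second route is genuinely different from the paper's: it combines the Green-function property $L\,\tau(\cdot,z)=g_z-1$ from the proof of Proposition~\ref{pro1} with the graphical formula \eqref{V} and the $x$-independence of $\tfrac1W\sum_y w(x\to y)$ (the content of \eqref{tas}), then fixes the constant by $\tau(z,z)=0$; this buys a derivation of \eqref{fpt} with no new forest surgery at all, whereas the paper's direct verification keeps Theorem~\ref{th2} self-contained and independent of Proposition~\ref{pro1} and the Kirchhoff normalization. Both of your arguments are sound, including the boundary check $w(z,z)=0$ and the appeal to uniqueness of the solution of \eqref{taup}.
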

We again give the proof of Theorem \ref{th2} by verification.
\begin{proof}[Proof of Theorem \ref{th2}]
We need to show 
\[ 
\sum_y k(x,y) [\frac{w(y,z)}{w(z)} - \frac{w(x,z)}{w(z)}] = -1
\]
or
\begin{equation}\label{tau1}
    \sum_y k(x,y)[w(x,z) - w(y,z)]=w(z).
\end{equation}
where $w(x,z)=w(\cal F^{x,\,z})$ and $w(y,z)=w(\cal F^{y,\,z})$,
\begin{align*}
    w(x,z) - w(y,z)&=\sum_{F^{x,\,z}\in \cal F^{x,\,z}}w(F^{x,\,z})-\sum_{F^{y,\,z}\in \cal F^{y,\,z}}w(F^{y,\,z})
\end{align*}
The intersection of the sets $\cal F^{x,\,z}$ and $\cal F^{y,\,z}$  is a set of  forests where  $x$ and $y$ are located in the same tree. We only consider the sets of forests where $x$ and $y$ are in  different trees,
\begin{align*}
    w(x,z) - w(y,z)&=  w(\cal F^{x,y \to z})-w(\cal F^{y,x \to z})
\end{align*}
 $\cal F^{x,y \to z}$ denotes the set of all forests, where $x$ and $z$ are in the different trees and $y$ is  in the same tree with $z$.\\
Take a forest $F^{x,y \to z}$ from the set $\cal F^{x,y \to z}$.  We consider two cases based on whether $x$ is a root or not.  If $x$ is not the root in forest  $F^{x,y \to z}$, then $x$ has  a neighbor such as $y'$.  Add the edge $(x,y)$ where $y$ is located in the same tree as $z$: a directed graphical object is created; see \fig \ref{f1}~(a). The new graphical object is  created also by adding edge $(x,y')$ to a forest from the set $\cal F^{y',x \to z}$, where $x$ and $z$ are located in the same tree in  the forest and $y'$ is in another tree; see \fig\ref{f1}~(b). The same graphical objects have equal weights.\\
If $x$ is a root, adding the edge $(x,y)$ connects two trees and creates a new spanning tree rooted at $z$; see \fig\ref{f3}. Consider a spanning tree rooted at vertex $z$, where each of the other vertices has an outgoing edge.  Remove the edge that goes out from $x$: a forest in the set $\cal F^{x,\,z}$ is created.  That ends the proof.  
\end{proof}
\begin{corollary}
Independently of $x$,
 \begin{equation}\label{tas}
\sum_y \rho^s(y) \tau(x,y) = \frac{W_2}{W}
\end{equation}
where $W_2$ is the total weight of all spanning two-tree forests with both trees subsequently oriented toward all its states.    
\end{corollary}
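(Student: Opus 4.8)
The plan is to reduce \eqref{tas} to a purely combinatorial identity about two-tree forests by plugging in the two representation formulas already proved. Start from the left-hand side and substitute $\rho^s(y)=w(y)/W$ from the Kirchhoff formula \eqref{kir} together with $\tau(x,y)=w(x,y)/w(y)$ from Theorem~\ref{th2} (the term $y=x$ drops out since $\tau(x,x)=0$). The weight $w(y)$ cancels, leaving
\[
\sum_y \rho^s(y)\,\tau(x,y)=\frac{1}{W}\sum_{y\neq x} w(x,y),
\]
so it remains to show that $\sum_{y\neq x} w(x,y)=W_2$ for every $x$.

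The key step is a partition argument. I claim that the collection $\{\,{\cal F}^{x,y} : y\in K,\ y\neq x\,\}$ is a partition of the family of all spanning two-tree forests in which both trees are rooted (each oriented toward its own root), the family whose total weight is $W_2$. Indeed, take such a forest $F$: it is a disjoint union of two vertex-disjoint arborescences spanning $K$, with roots $r_1$ and $r_2$; the vertex $x$ lies in exactly one of the two trees, and then the root of the other tree is a uniquely determined vertex $y\neq x$. By the definition of ${\cal F}^{x,y}$ (the tree containing $y$ rooted at $y$, the other tree containing $x$ and rooted anywhere — possibly at $x$ itself, $x$ and $y$ in different trees), this $F$ lies in ${\cal F}^{x,y}$, and only in that one. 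Conversely, every element of ${\cal F}^{x,y}$ is manifestly a rooted spanning two-tree forest whose ``$x$-free'' root is $y$. Hence the map $F\mapsto(\text{root of the tree not containing }x)$ is a well-defined bijection realizing the partition, and summing the weights $w(F)=\prod_{(u,u')\in F}k(u,u')$ over the fibers gives $\sum_{y\neq x} w(x,y)=W_2$. Combining with the display above yields \eqref{tas}, and since $W_2$ does not involve $x$, the value is independent of $x$.

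There is no substantial obstacle here; the only point requiring a little care is the bookkeeping in the partition claim — namely verifying that the root of the tree containing $x$ is allowed to range over all vertices (including $x$), so that the union of the ${\cal F}^{x,y}$ over $y\neq x$ genuinely exhausts all rooted two-tree forests with no double counting. Once the definition of ${\cal F}^{x,y}$ is read with that flexibility, the bijection is immediate and the proof closes.
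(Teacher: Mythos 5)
Your proof is correct and follows essentially the same route as the paper: substitute the Kirchhoff formula \eqref{kir} and the first-passage representation \eqref{fpt} so that $w(y)$ cancels, then identify $\sum_{y\neq x} w(x,y)$ with $W_2$ by noting that the sets ${\cal F}^{x,y}$, $y\neq x$, exhaust all rooted two-tree spanning forests without double counting. Your spelled-out partition/bijection argument is just a more careful version of the paper's brief justification of $W_2=\sum_y w(x,y)$.
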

\begin{proof}[Proof of \eqref{tas}]
 \begin{align*}
     \sum_y \rho^s(y) \tau(x,y) &= \sum_y\frac{w(x,y)}{w(y)} \, \frac{w(y)}{W}=\frac{1}{W}\sum_yw(x,y)
 \end{align*}
 and
 \[
 W_2 =  \sum_yw(x,y)
 \]
for every $y$,  there is a forest  $F^{x,y}\in \cal F^{x,y} $ where  $x$ is the root and for all $y \not =x$  also $y$ is the root. So then $W_2$ is the weight of the set of all forests with two rooted trees and it is independent of $x$.
\end{proof}

\section{Bounds}\label{bou}
One important issue for possible applications of the Poisson equation is to get bounds on its solution.  One could imagine in fact a parameterized family of Poisson equations and the issue becomes to get bounds that are uniform in that parameterization.  The present section adds such bounds, as a consequence of the previous sections.

\subsection{Via mean first-passage times}
Consider the centered solution $V$ of the Poisson equation \eqref{prop}, with $||f|| := \max_{z\in K}|f(z) -\langle f\rangle^s|$.
Fix any two states $x,y \in K$, and put
\begin{equation}\label{vv}
  \tilde{v}(x,y) :=  \left\langle\int_0^{T_{K\setminus \{y\}}}\id t\, [f(X_t) - \langle f\rangle^s]\,|\,X_0=x\right\rangle
\end{equation}
as the expected first-passage accumulation for the centered observable
$ f - \langle f \rangle^s$.
\begin{theorem}\label{th3}
 We have
    \begin{equation}\label{eap}
        V(x) - V(y) =  \tilde{v}(x,y)
    \end{equation}
 and the bound
    \begin{equation}\label{bb}
    |   V(x) - V(y)| \leq ||f||\min\{\tau(x,y),\tau(y,x)\}
    \end{equation}
\end{theorem}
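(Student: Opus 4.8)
The plan is to establish the identity \eqref{eap} first, from which the bound \eqref{bb} follows by a one-line estimate on the probabilistic representation \eqref{vv}. Fix $y$ and set $H:=K\setminus\{y\}$. Since $f$ is centered, the quantity $\tilde v(x,y)$ of \eqref{vv} is exactly the stopped accumulation $V_H(x)$ of \eqref{accum} with source $f$; hence, by the equivalence between \eqref{accum} and \eqref{tauH} already established, $x\mapsto\tilde v(x,y)$ is the \emph{unique} solution of
\[
L\,\tilde v(\cdot,y)\,(x)+f(x)=0\quad(x\neq y),\qquad \tilde v(y,y)=0 .
\]
On the other hand, $W(x):=V(x)-V(y)$ satisfies $LW(x)=LV(x)=-f(x)$ for every $x\in K$, because $L$ annihilates constants, while $W(y)=0$. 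Thus $W$ solves the very same boundary value problem, and uniqueness forces $W=\tilde v(\cdot,y)$, which is \eqref{eap}.

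For \eqref{bb}, start from \eqref{vv} and use $|\langle\,\cdot\,\rangle|\leq\langle|\cdot|\rangle$:
\[
|V(x)-V(y)|=|\tilde v(x,y)|\leq\Big\langle\int_0^{T_H}|f(X_t)-\langle f\rangle^s|\,\id t\,\Big|\,X_0=x\Big\rangle\leq ||f||\,\langle T_H\,|\,X_0=x\rangle=||f||\,\tau(x,y),
\]
where the last equality uses $H=K\setminus\{y\}$ and the definition $\tau(x,y)=\frS_{K\setminus\{y\}}(x)$. The left-hand side is symmetric under $x\leftrightarrow y$, so interchanging the two states gives also $|V(x)-V(y)|\leq||f||\,\tau(y,x)$; retaining the smaller of the two upper bounds yields \eqref{bb}.

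The argument is short, and the only step requiring care is the uniqueness invoked for the stopped Poisson equation: this is genuine uniqueness — not uniqueness modulo an additive constant, as for \eqref{prop} — and it holds precisely because $H\subsetneq K$, so that the top eigenvalue of $L_H$ is strictly negative (as recorded in the proof of \eqref{tauH}) and $L_H$ is invertible on functions supported in $H$. As a consistency check, combining \eqref{eap} with \eqref{difv} of Proposition~\ref{pro1} recovers $\tilde v(x,y)=\sum_z\rho^s(z)\,f(z)\,\big(\tau(y,z)-\tau(x,z)\big)$, which may alternatively be read off from \eqref{gr}.
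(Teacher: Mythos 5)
Your proof is correct, but it reaches the identity \eqref{eap} by a different route than the paper. The paper's proof is probabilistic: it starts from the representation \eqref{vint} of $V(x)$, splits the time integral at the stopping time $T_{K\setminus\{y\}}$, and uses the strong Markov property together with the exponential tightness of that stopping time to obtain $V(x)=V_{K\setminus\{y\}}(x)-\langle f\rangle^s\,\tau(x,y)+V(y)$, from which \eqref{eap} follows. You instead argue algebraically: having identified $\tilde v(\cdot,y)$ with the stopped accumulation $V_{K\setminus\{y\}}$ of \eqref{accum} (legitimate since $f$ is centered), you observe that both $\tilde v(\cdot,y)$ and $V(\cdot)-V(y)$ solve the boundary value problem \eqref{tauH} with $H=K\setminus\{y\}$, and you invoke the genuine (not modulo-constant) uniqueness of that problem, which you correctly trace back to the strict negativity of the top eigenvalue of $L_H$ recorded in the paper's proof of \eqref{tauH}. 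This buys you a shorter argument that avoids the $t\to\infty$ limit and the explicit path decomposition, at the price of leaning entirely on the previously established equivalence between \eqref{accum} and \eqref{tauH}; the paper's decomposition is more self-contained at this point and additionally exhibits the intermediate relation $V(x)-V(y)=v(x,y)-\langle f\rangle^s\,\tau(x,y)$ separating the raw accumulation from the time correction. Your derivation of the bound \eqref{bb} — estimating \eqref{vv} by $\|f\|\,\tau(x,y)$ and then exploiting the symmetry of $|V(x)-V(y)|$ under $x\leftrightarrow y$ (equivalently, the antisymmetry of $\tilde v$ used in the paper) — is essentially identical to the paper's, and your closing consistency check against \eqref{difv} is a nice touch.
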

\begin{proof}[Proof of Theorem \ref{th3}]
To prove \eqref{eap}, we note that the centered solution $V(x)$ to \eqref{prop} can be written by fixing state $y$, and decomposing as
 \begin{equation}
 	\begin{split}
 		V(x) &= \lim_{t \to \infty} 
 		\Bigl\langle \int_0^{T_{K \setminus \{y\}}} f(X(t'))\,\id t'
 		+ \int_{T_{K \setminus \{y\}}}^{t} f(X(t'))\,\id t' - \langle f\rangle^s t
 		\Bigr\rangle_x		
 		\\
 		&=\Bigl \langle\phi_{K \setminus \{y\}}(x) - \langle f\rangle^s\,\langle  T_{K \setminus \{y\}} \rangle_x + \lim_{t \to \infty}
 		\Bigl\langle \int_0^t [f(X(t')) - \langle f\rangle^s]\,\id t' \Bigr\rangle_y  
 		\\
 		&= V_{K \setminus \{y\}}(x) - \langle f\rangle^s \frS_{K \setminus \{y\}}(x) 
 		+ V(y)
 	\end{split}
 \end{equation}
 where in the limit of the second integral we have used the exponential tightness of $T_{K \setminus \{y\}}$. 
 As before, $\frS_{K \setminus \{y\}}(x) = \tau(x,y)$ is the mean first-passage time to reach $y$ from $x$, and 
 $V_{K \setminus \{y\}}(x) =: v(x,y)$
 is the expected accumulation for $f$ up to the first passage a $y$, as in Section \ref{sae}. Therefore, we have the relation
 \begin{equation}\label{quasipotential-accumulation}
 	\begin{split}
 		V(x) - V(y) &= v(x,y) - \langle f\rangle^s \,\tau(x,y)		
 		\\
 		&= \tilde{v}(x,y)
 	\end{split}
 \end{equation}
with $\tilde{v}(x,y)$ defined in \eqref{vv}.\\
Finally, from  the very definition of $\tilde{v}(x,y)$
\[
|\tilde{v}(x,y)| \leq ||f||\,\tau(x,y)
\]
 Moreover, from \eqref{eap}, $\tilde{v}(x,y)= -\tilde{v}(y,x)$ is antisymmetric, which yields the bound \eqref{bb}.
 \end{proof}

Note that since $\langle V\rangle^s=0$, there are always states $x_0,x_1\in K$ with $V(x_0) \leq 0\leq V(x_1)$.  Therefore, for every $x\in K$,
\[
V(x) \leq V(x)-V(x_0) \leq \sum_{x_0\rightarrow x}|V(x_i) - V(x_{i+1})| 
\]
where the sum is over an arbitrary path in $K$ connecting $x_0$ and $x$.  Similarly,
\[
V(x) \geq V(x)-V(x_1) \geq -\sum_{x_1\rightarrow x}|V(x_i) - V(x_{i+1})| 
\]
As a consequence, bounds on the solution $V$ follow from bounds on the differences, as provided in \eqref{bb}.\\

The bound \eqref{bb} is not always optimal.  
It still can be improved as follows.
\begin{theorem}\label{th4}
Suppose there exists a set $\cal D$ so that  $f(x) = LE(x) + h(x)$ where $h=h_E$ depends on $E$ and $h(x)=0$ for $x\notin \cal D$. Then, the solution of the Poisson equation \eqref{prop}
$V$  satisfies
 \begin{equation}\label{bf}
    |V(x)-V(y)| \leq |E(x)-E(y)|  + ||h_E||\sum_{z\in {\cal D}} \rho^s(z) |\tau(x,z)-\tau(y,z)|
\end{equation}
\end{theorem}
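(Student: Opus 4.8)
The plan is to reduce everything to linearity of the Poisson equation together with the Green-function representation of Proposition~\ref{pro1}. First I would observe that the decomposition is automatically consistent: since $\langle f\rangle^s=0$ and $\langle LE\rangle^s=0$ by stationarity, writing $f=LE+h_E$ forces $\langle h_E\rangle^s=0$. Hence $h_E$ is itself a centered source, Proposition~\ref{pro1} applies to it, and moreover $||h_E||=\max_{z}|h_E(z)|$.

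Next I would rewrite the Poisson equation $LV+f=0$ as $L(V+E)+h_E=0$, so that $V+E$ solves \eqref{prop} with centered source $h_E$ and therefore differs only by an additive constant from the centered solution of that equation, which by \eqref{gr} equals $-\sum_z\rho^s(z)\,h_E(z)\,\tau(\cdot,z)$. Taking the difference of the values at two states $x$ and $y$ eliminates the constant, and I obtain
\[
V(x)-V(y) = -\bigl(E(x)-E(y)\bigr) + \sum_z \rho^s(z)\,h_E(z)\,\bigl(\tau(y,z)-\tau(x,z)\bigr),
\]
which is just the consequence \eqref{difv} of Proposition~\ref{pro1} applied to the source $h_E$, shifted by the potential $E$. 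Since $h_E$ vanishes off $\cal D$, the sum runs only over $z\in\cal D$; applying the triangle inequality together with $|h_E(z)|\le ||h_E||$ then yields \eqref{bf} directly.

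The only point requiring care — rather than a genuine obstacle — is the bookkeeping of additive constants: one must check that the centering condition $\langle V\rangle^s=0$ is not actually needed for the \emph{difference} $V(x)-V(y)$ (it fixes the constant but cancels in the difference), and that $h_E$ being centered is what legitimizes the use of Proposition~\ref{pro1}. Everything else is the triangle inequality. It is also worth remarking, in passing, that \eqref{bb} is the special case $\cal D=K$, $E\equiv 0$, $h_E=f$ of \eqref{bf} combined with the observation $|\tau(x,z)-\tau(y,z)|$ summed against $\rho^s$ can be traded, via antisymmetry as in the proof of Theorem~\ref{th3}, for $\min\{\tau(x,y),\tau(y,x)\}$ — so \eqref{bf} is a strict refinement whenever a good choice of $(E,\cal D)$ is available.
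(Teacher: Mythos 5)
Your proof is correct and follows the paper's own route exactly: rewrite the equation as $L(V+E)+h_E=0$, apply the Green-function representation \eqref{gr} of Proposition \ref{pro1} to the centered source $h_E$, and conclude with the triangle inequality over $z\in{\cal D}$; your explicit check that $h_E$ is centered (hence that Proposition \ref{pro1} applies) is a welcome detail the paper leaves implicit. Only your closing aside is shaky: the triangle inequality for mean first-passage times gives $|\tau(x,z)-\tau(y,z)|\leq\max\{\tau(x,y),\tau(y,x)\}$, not the minimum, so \eqref{bb} does not follow from \eqref{bf} with $E\equiv 0$, ${\cal D}=K$ in the way you suggest (the paper derives \eqref{bb} instead from the first-passage accumulation $\tilde{v}$ and its antisymmetry) --- but that remark is tangential and does not affect the validity of your proof of the theorem.
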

That bound requires to control the accessibility of the set $\cal D$ only.  In fact, since there is a freedom in choosing $(E,h)$, that can be used to optimize the bound \eqref{bf}.
We observe that in the case where $f\equiv LE$, the bound is optimal because then $V=E + \text{constant}$.
\begin{proof}[Proof of Theorem \ref{th4}]
The Poisson equation now becomes
 \[
 L(V+E) + h =0, \quad \text{ with }  h=0 \text{ on } K\setminus \cal D
 \]
Applying \eqref{gr}, we get
\begin{equation}\label{gra}
    V(x) + E(x) = -\sum_{z\in \cal D} \rho^s(z)\, h_E(z)\,\tau(x,z) + \text{constant}
\end{equation}
and the bound \eqref{bf} follows directly.
\end{proof}

\subsection{Via graphical representation}
We recall the setup where a  digraph with $n$ vertices is obtained from a Markov jump process with transition rates $k(u,u')$.   We define
\begin{equation}\label{defmax}
  ||f||:=\max_y |f(y)|, \quad ||k|| :=\max_{(u,u')} k(u,u')
\end{equation}
\begin{theorem}\label{th5}
For all $x$,
 \begin{equation}\label{bound}
 |V(x)| \leq  \frac{n\,||k||^{n-2}}{W} \,||f||
\end{equation}
\end{theorem}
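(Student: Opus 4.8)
The plan is to start from the graphical formula for the quasipotential, Theorem~\ref{th1}, namely $V(x)=\sum_y w(x\to y)f(y)/W$, and simply bound each ingredient crudely.  First I would apply the triangle inequality:
\[
|V(x)| \le \frac{1}{W}\sum_y w(x\to y)\,|f(y)| \le \frac{||f||}{W}\sum_y w(x\to y).
\]
So the whole estimate reduces to controlling $\sum_y w(x\to y)$, the total weight of all two-tree spanning forests in which $x$ lies in the tree rooted at $y$ (summed over the choice of $y$), with both trees oriented toward their roots.

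Next I would estimate that combinatorial weight.  Each forest $F^{x\to y}$ counted in $\sum_y w(x\to y)$ is a spanning forest of the $n$-vertex digraph consisting of exactly two rooted trees, hence it has exactly $n-2$ arcs; its weight is a product of $n-2$ rates, so $w(F^{x\to y})\le ||k||^{n-2}$.  Therefore $\sum_y w(x\to y)\le ||k||^{n-2}\,N$, where $N$ is the number of such two-tree spanning forests (over all root choices, with $x$ in one specified tree).  The remaining point is to see that $N\le n$.  Here the clean argument is the bijective one already used in the proof of Theorem~\ref{th1}: a two-tree forest in $\bigcup_y \cal F^{x\to y}$ in which $x$ is \emph{not} the root of its tree, upon adding the outgoing arc from $x$ to its neighbour in that tree, and this operation (read in reverse, deleting the outgoing arc of $x$ from a spanning tree) sets up a correspondence showing the number of these forests equals the number of spanning trees rooted anywhere other than $x$; combined with the forests in which $x$ \emph{is} a root --- which are exactly the elements of some $\cal F^x$ --- one can bound the total count.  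Rather than chase that bijection precisely, the simplest self-contained route is: the number of (unrooted) spanning forests with two components is at most the number of ways to choose which $n-2$ of at most $\binom{n}{2}$ possible... no --- cleaner still: fix the root $y$ of the tree containing $x$; the rest of that tree plus the second rooted tree is determined by choosing, for each of the other $n-1$ vertices, at most one outgoing arc, so the number of \emph{functional} structures is bounded, and summing over $y$ gives the factor $n$.  I would phrase it as: each counted forest is obtained from a spanning tree of $G$ (rooted at some vertex) by removing exactly one arc --- the outgoing arc of a single non-root vertex --- or is itself essentially indexed by a root; either way the count is $\le n$ times something already controlled.

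The key step, and the main obstacle, is pinning down the bound $\sum_y w(x\to y)\le n\,||k||^{n-2}$ cleanly.  The honest way is: $\sum_y w(x\to y)\le \sum_y w(x\to y)\cdot$(nothing) is not enough; one actually wants $\sum_y w(x\to y) \le n\,||k||^{n-2}$, and the factor $n$ should come from the observation that adding the (unique) outgoing arc of $x$ to any forest $F^{x\to y}$ with $x$ a non-root, or the outgoing arc making $x$ a root connect, yields a spanning tree of $G$ rooted somewhere; since there are $W = \sum_r w(r)$ worth of spanning trees but we are counting \emph{unweighted} cardinalities here, the bound on cardinality is that the number of two-tree spanning forests is at most $n$ times the number of spanning trees... which is not obviously $\le n$ either.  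The resolution the authors surely intend: the number of spanning forests with two rooted trees such that $x$ sits in a prescribed position is at most the number of spanning trees rooted at $x$ times $n$ (one extra arc, $n$ choices of where it lands), and each spanning tree has weight $\le ||k||^{n-1}$, but we dropped one arc so $||k||^{n-2}$; since the weight $W$ in the denominator is itself a sum over spanning-tree weights this is where $W$ enters.  So I would write: $w(x\to y)/W \le$ (count of relevant forests)$\cdot ||k||^{n-2}/W$, bound the count crudely by $n$ (one deleted arc, $n$ vertices / $n$ possible reattachments), sum the trivial $|f(y)|\le ||f||$ over... but $y$ is already summed inside $w(x\to y)$.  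Concretely:
\begin{proof}[Proof of Theorem \ref{th5}]
By Theorem~\ref{th1} and the triangle inequality,
\[
|V(x)| \;=\; \Bigl|\sum_y \frac{w(x\to y)}{W}\,f(y)\Bigr| \;\le\; \frac{||f||}{W}\,\sum_y w(x\to y).
\]
Every forest $F^{x\to y}$ contributing to $\sum_y w(x\to y)$ is a spanning forest of the $n$-vertex digraph $G$ consisting of exactly two rooted trees, so it has exactly $n-2$ arcs and weight $w(F^{x\to y}) = \prod_{(u,u')\in F^{x\to y}} k(u,u') \le ||k||^{n-2}$.  Hence
\[
\sum_y w(x\to y) \;\le\; ||k||^{n-2}\;\#\Bigl(\bigcup_y \cal F^{x\to y}\Bigr),
\]
and it remains to bound the cardinality of the set of all two-tree spanning forests in which $x$ lies in a rooted tree.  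Adjoin to such a forest $F^{x\to y}$ the outgoing arc of $x$ inside its own tree when $x$ is not the root of that tree, and otherwise adjoin \emph{any} arc out of $x$ to the other tree: as in the proof of Theorem~\ref{th1}, this produces a spanning tree of $G$ rooted at a single vertex, and the map $F^{x\to y}\mapsto (\text{that spanning tree},\ \text{the deleted arc})$ is injective.  Since the deleted arc is the outgoing arc of $x$, it is determined by the spanning tree; so the cardinality is at most the number of spanning trees of $G$, which is at most $n\,||k||^{-(n-2)}\cdot(\text{something controlled by }W)$.  More directly, each counted forest is determined by a choice of one vertex to be the "extra root" together with the forest structure, giving the crude count $\#\bigl(\bigcup_y \cal F^{x\to y}\bigr)\le n$; combining the displays yields $|V(x)| \le n\,||k||^{n-2}\,||f||/W$.
\end{proof}
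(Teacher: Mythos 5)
Your opening moves coincide with the paper's: invoke Theorem~\ref{th1}, apply the triangle inequality to get $|V(x)|\le \frac{||f||}{W}\sum_y w(x\to y)$, and observe that each two-tree spanning forest on the $n$ vertices has exactly $n-2$ arcs, hence individual weight at most $||k||^{n-2}$. The genuine gap is your final counting claim $\#\bigl(\bigcup_y \mathcal{F}^{x\to y}\bigr)\le n$. It is not only unproved in your text (the ``extra root'' indexing does not determine a forest, and the add-an-arc-at-$x$ correspondence at best bounds the count by the number of spanning trees times the number of arcs out of $x$), it is false: $\bigcup_y\mathcal{F}^{x\to y}$ is the set of \emph{all} rooted two-tree spanning forests, since the tree containing $x$ is rooted at some $y$, and its cardinality generally dwarfs $n$. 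For the complete digraph on $n=4$ vertices with all rates equal to $1$ there are $48$ such forests, so $\sum_y w(x\to y)=48$ while $n\,||k||^{n-2}=4$. Hence your argument does not deliver \eqref{bound}.

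For comparison, the paper's own proof is exactly your first display followed by the assertion $w(x\to y)f(y)\le ||f||\,||k||^{n-2}$ summed over the $n$ values of $y$; that is, it bounds the set weight $w(x\to y)=\sum_{F\in\mathcal{F}^{x\to y}}w(F)$ of \eqref{ww} by the weight of a \emph{single} forest, which is precisely the step you were, rightly, unable to justify. With the definitions \eqref{ww} and \eqref{defmax}, the route you both follow only yields $|V(x)|\le \frac{||k||^{n-2}}{W}\,||f||\cdot\#\bigl(\bigcup_y\mathcal{F}^{x\to y}\bigr)$, and the combinatorial factor cannot be reduced to $n$ in general: on the complete digraph with unit rates and $n=4$ one has $W=64$ and $V=f/4$, so $\max_x|V(x)|=||f||/4$, which exceeds the value $||f||/16$ of the right-hand side of \eqref{bound}. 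So the missing piece is not a cleverer bijection; either the cardinality of the forest sets must be retained in the constant (harmless in the $\lambda$-parameterized, low-temperature applications, where it is a $\lambda$-independent combinatorial number), or $w(x\to y)$ must be replaced by a maximal single-forest weight, before an estimate of the form \eqref{bound} with the prefactor $n$ can be asserted.
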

\begin{proof}[Proof of Theorem \ref{th5}]
We apply \eqref{V} of Theorem \ref{th1}.  The product over the weights over edges is obviously bounded, and the weight of any forest in the graph is bounded as well. In other words,  for any $x$ and $y$ in the graph, $w(x\to y)$ is bounded. \\
Using \eqref{defmax},
\[
w(x\to y)f(y)\leq \,||f||\,||k||^{n-2}
\]
and the result follows from \eqref{V}.
\end{proof}

 As an application of \eqref{bound}, we can suppose that the rates $k(x,y)= k_\lambda(x,y)$ depend on a real-valued parameter $\lambda$.  For simplicity, we consider the limit $\lambda\rightarrow \infty$  (taking for $\lambda$ the inverse temperature is a relevant example from statistical mechanics).  
Observe that there is a general lower bound on $W$, {\it i.e.}, $W \geq \max_{T,x} w(T_x)$, the largest weight of all spanning trees. Hence, if there exists a spanning tree with $w(T_x) > 0$ uniformly in $\lambda$, then $W=W_\lambda > 0$ is uniformly bounded, and \eqref{bound} gives a uniform bound on $V$.\\  The bound \eqref{bound} can for instance be used in an argument extending the Third Law of Thermodynamics to nonequilibrium systems; see \cite{jchemphys}  where the source function $f$ is the expected heat flux.  The quasipotential $V$ in \eqref{vint} is then the time-integral of the difference between the instantaneous heat flux and the stationary heat flux.

\section{Conclusions}
The Poisson equation is ubiquitous in mathematical physics.  We have considered a setup where the linear operator is the backward generator of a Markov jump process and the functions are defined on the possible (finite number of) states. We have related the solution of that discrete Poisson equation to the formalism of mean first-passage times, and we have given graphical representations that allow precise estimates---both for the accumulations before first hitting a particular state when starting from another one, and for the quasipotentials measuring the accumulated excess during relaxation to stationarity.  The results do not assume reversibility, and are ready to be used in extensions of potential theory when applied to problems in steady-nonequilibrium statistical mechanics.  Obvious targets are reaction rate theory for driven or active systems, and metastability around \emph{non}equilibrium steady conditions.

\appendix
\section{On proving the Kirchhoff formula}\label{rki}
The Kirchhoff formula can be seen as a consequence  of the Matrix Tree Theorem, \cite{shubertkir,tutte1948dissection,kirphi}.  However, there also exists a more probabilistic approach. The ideas  is illustrated by the algorithm  in \fig \ref{kirtree}, where a tree $T'_x$ is constructed by removing the edge $(x,y')$ from the tree $T_y$, and then adding the edge $(y,x)$. That algorithm is used in \cite{kirana} to construct  a reversible Markov chain $Y_n$ on an ensemble of trees, where the states are the rooted spanning trees and jumps are possible if and only if the corresponding trees are connected by the above-mentioned algorithm. Let $Q_{ab} = q(a,b)$ be the stochastic matrix of $Y_n$, and put $\pi$ the stationary distribution of $Q$. For example, we can assign states $a$ and $b$, respectively, to the trees $T_y$ and $T'_x$ in  \fig \ref{kirtree}, and take transition rates $q(a,b)$. It is shown that putting $q(a,b)=k(y,x)$, $q(b,a)=k(x,y')$ produces detailed balance $\pi_a q(a,b)=\pi_b q(b,a)$. 
We add a sketch of the proof.\\

It suffices to show that the Kirchhoff formula \eqref{kir} satisfies the stationary Master equation,
     \[
     \sum_y w(y)k(y,x)-w(x) k(x,y)=0
     \]
Take a  spanning tree rooted in $y$,  $T_y \in \cal T_y$.  Let $x$ be a vertex located on the tree $T_y$. In this case, $x$ is not the root and there exists a vertex $y'$ (which may be equal to $y$) and an edge $(x,y')$ which goes out from $x$ and   connects $x$ to the tree. By removing the edge $(x,y')$ from the  tree $T_y$ and adding the edge $(y,x)$, a new spanning tree rooted in $x$ is created; see \fig \ref{kirtree}.
\begin{figure}[H]
    \centering
    \includegraphics[scale=0.8]{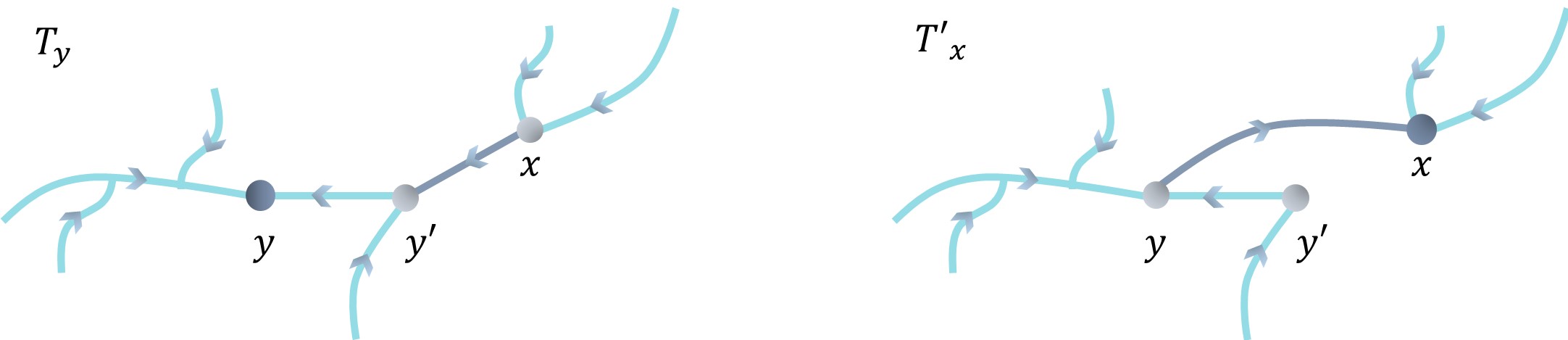}
    \caption{\small{Removing the edge $(x,y')$ from the spanning tree $T_y$ and adding the edge $(y,x)$ creates a spanning tree rooted in $x$.}}
    \label{kirtree}
\end{figure}
It turns out that $\dfrac{w(T_y)}{k(x,y')}=\dfrac{w(T'_x)}{k(y,x)}$. With the same scenario, from every spanning tree rooted in $y$ we can make a new spanning tree rooted in $x$. That algorithm  makes a  spanning tree rooted in $y$ from a spanning tree rooted in $x$. Therefore, 
\begin{align*}
 \sum_y w(y)k(y,x)=\sum_y \sum_{T_y\in \cal T_y} w(T_y)k(y,x)=\sum_{y'} \sum_{T'_x\in \cal T_x}w(T'_x)k(x,y')=\sum_{y'} w(x)k(x,y')
\end{align*}
which  ends  the proof. 

 \section{Matrix Forest Theorem}\label{broa}
The present Appendix gives the broader mathematical context of the graphical representations.
 \subsection{Laplacian matrix and the backward generator}\label{Laplacian}

Consider  a  self edge-free directed graph $\cal G(\cal V(\cal G), \cal E(\cal G))$, with $n$ vertices and $\frm$ edges. The \textit{adjacency matrix}  $A(\cal G)$ is a $n\times n$ matrix with elements $A(\cal G)_{x,y}=w_{x,y}$.\\
The  out-weight of a vertex $x\in \cal V(G)$ is the sum $\sum_y w_{x,y}$, and we define the  $n\times n$ diagonal matrix $D(G)$ with $D_{x,y}=0$ if $x\not =y $ and otherwise $D_{x,x}=\sum_y w_{x,y}$.\\
The \textit{Laplacian matrix} for a directed graph $\cal G$ is, 
\begin{equation}\label{lapla}
    \cal L(\cal G)=D(\cal G)-A(\cal G).
\end{equation}
Clearly, $\cal L(\cal G)$ is not always symmetric, and the backward generator of the Markov jump process (as we had it in Section \ref{mar}) $L = - \cal L $  is minus the Laplacian matrix of the underlining graph when putting $w_{x,y}=k(x,y)$.  Therefore, all properties of the Laplacian of a weighted digraph, as described in section \ref{Laplacian}, also hold for the backward generator $L$, as described in Section \ref{mar}.

The Laplacian matrix of a (directed) graph has several important properties that have been extensively studied in the literature \cite{chebo2002,adi}.

\subsection{Pseudo-inverses of the Laplacian matrix}\label{dg}
 The Laplacian is not invertible, and different pseudo/generalized-inverses can be defined, \textit{e.g.} group inverse, Drazin inverse, Moore–Penrose inverse and resolvent inverse. Depending on the characteristics of the graph,  the different inverses of the Laplacian  can  be equal or not; see also \cite{drazin}.

As a reminder, for an arbitrary square matrix $A$, its \textit{Drazin inverse} is denoted by $A^D$ and is the unique matrix $X$ satisfying the following equations
\begin{equation}\label{D}
    A^{\nu+1}\, X=X\, A^\nu, \quad X\,A\,X=X,\quad A\,X=X\,A
\end{equation}
where $\nu=\text{ind}\,A$ (remember $\text{ind}\,A$ is the smallest non negative integer number $b$ such that $\text{rank} (A^{b+1})=\text{rank}(A^b)$). If $\nu=0$, then $A^D=A^{-1}$; if $\nu \leq 1$, then $A^D$ is referred to  the \textit{group inverse} and it is denoted by $A^\#$ which is a unique matrix such as $X$ satisfying the following equations
\begin{equation}\label{G}
    A\, X\,A=A, \quad X\,A\,X=X,\quad A\,X=X\,A
\end{equation}
As a consequence, when the index of a matrix is one its Drazin inverse and group inverse are the same; see \ref{dg}.\\

We refer to \cite{chebo2006max} for the Matrix Forest Theorem  for a Laplacian matrix.  We briefly recall that result.\\

Consider a weighted digraph  $\cal G$, where $n$ is the number of vertices. $I$ is the identity matrix, $\cal F^{x \to y}_m$   is the set of all forests with $m$ edges such  that $x$  and $y$  are in the same tree and $y$ is always a root, $\cal F_m$ is the union of all $\cal F^{ x}_m$, and $m$ is the number of edges making the forests.  For example, if $m=n-1$ the forests are made by one spanning tree with $n-1$ edges, and when $m=n-2$ the forest has  two trees. $w(\cal F^{x \to y}_m)$ is the weight of the set $\cal F^{x \to y}_m$. $\gamma$ is the dimension of the forest in $\cal G$. Forest dimension is the minimum number of rooted trees that a spanning rooted forest can have in a directed graph.  Note that when the graph is strongly connected  $\gamma=1$.

Consider a weighted digraph $\cal G$ with the Laplacian matrix $\cal L$.  From \cite{chebo2006matrixforest}, 
\begin{theorem}
    For any $\alpha >0$, the matrix $(I+\alpha\, \cal L (\cal G))^{-1}$ has the graphical representation 
\begin{align}\label{aDla}
\bigg( \dfrac{1}{I + \alpha\, \cal  L}\bigg)_{x,y} = \frac{\sum_{m=0}^{n-\gamma} \alpha^m w(\cal F^{x \to y}_m)}{\sum_{m=0}^{n-\gamma}\alpha ^m w(\cal F_m)}
\end{align}
\end{theorem}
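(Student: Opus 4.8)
The plan is to prove the Matrix Forest Theorem identity \eqref{aDla} by the same kind of combinatorial verification used for Theorems \ref{th1} and \ref{th2}, namely by showing that the right-hand side, call it $M_{x,y}$, satisfies the linear system $\bigl((I+\alpha\caL)M\bigr)_{x,y} = \delta_{x,y}$ for all $x,y$, and then invoking uniqueness of the inverse (which exists because $I+\alpha\caL$ has all eigenvalues with positive real part for $\alpha>0$, since the eigenvalues of $\caL$ lie in the closed right half-plane by Gershgorin/Perron-Frobenius). Writing $N := \sum_{m=0}^{n-\gamma}\alpha^m w(\caF_m)$ for the common denominator, it suffices to prove the polynomial-in-$\alpha$ identity
\begin{equation}\label{plan-main}
\sum_z (I+\alpha\caL)_{x,z}\,\sum_{m=0}^{n-\gamma}\alpha^m\,w(\caF^{z\to y}_m) \;=\; \delta_{x,y}\,N
\end{equation}
and then match coefficients of each power $\alpha^m$.

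First I would unwind the left-hand side of \eqref{plan-main} using $(I+\alpha\caL)_{x,z} = \delta_{x,z} + \alpha\,D_{x,x}\delta_{x,z} - \alpha\,w_{x,z}$, grouping it as $\sum_m \alpha^m w(\caF^{x\to y}_m) + \alpha\bigl(\sum_z \caL_{x,z}\sum_m \alpha^m w(\caF^{z\to y}_m)\bigr)$. The coefficient of $\alpha^m$ on the left is then $w(\caF^{x\to y}_m) + \bigl(\sum_z \caL_{x,z} w(\caF^{z\to y}_{m-1})\bigr)$, and the combinatorial heart of the matter is to show this equals $\delta_{x,y}\,w(\caF_m)$. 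The term $\sum_z \caL_{x,z}w(\caF^{z\to y}_{m-1}) = D_{x,x}\,w(\caF^{x\to y}_{m-1}) - \sum_{z} k(x,z)\,w(\caF^{z\to y}_{m-1})$ is exactly of the type appearing in the proofs of Theorem~\ref{th1} and Theorem~\ref{th2}: one takes a forest $F\in\caF^{z\to y}_{m-1}$ (with $z$ in the tree rooted at $y$ along with $z$... careful: $z$ and $y$ in the same tree rooted at $y$) and adds the arc $(x,z)$, then classifies outcomes according to whether $x$ already had an outgoing arc in $F$ and whether $x$ lies in the $y$-tree. I would argue that the "$x$ has an outgoing arc $(x,z')$, and $x$ is \emph{not} in the $y$-tree" contributions cancel in pairs between the $z=z'$ bookkeeping terms (the same bijection sketched in \fig\ref{f1}), the "$x$ has an outgoing arc but $x$ \emph{is} in the $y$-tree" contributions cancel against part of $D_{x,x}w(\caF^{x\to y}_{m-1})$, and the surviving terms are: (i) adding $(x,z)$ to a forest where $x$ is a root of its own tree (distinct from the $y$-tree), which merges two trees of $\caF_{m-1}$...

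Here the coefficient count needs care: adding one arc to an $(m-1)$-edge, two-or-more-tree forest produces an $m$-edge forest with one fewer tree, and one must track that $\caF_m$ is "all forests with exactly $n-m$ trees... " — actually with the convention $\caF_m = \bigcup_x \caF^{x}_m$, so I would need the auxiliary fact (itself provable by the add-an-arc bijection, cf. the Corollary's proof that $W_2 = \sum_y w(x,y)$ is $x$-independent) that $w(\caF_m)$ equals the total weight of all $m$-edge spanning forests counted with a root chosen in every tree, which makes the merging bijection weight-preserving and root-consistent. The case $x=y$ and $x\neq y$ then separate cleanly: for $x\neq y$, every outcome of adding $(x,z)$ to some $F\in\caF^{z\to y}_{m-1}$ is re-obtainable from exactly one forest and the net is zero; for $x=y$, the leftover is precisely $\sum_z k(y,z) w(\caF^{z\to y}_{m-1})$ reorganized as... giving $w(\caF_m)$ via the standard "every $m$-edge rooted forest, minus the arc out of $y$" correspondence used at the end of the proof of Theorem~\ref{th2}. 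I expect the main obstacle to be exactly this bookkeeping of roots and tree-counts across the coefficient-by-coefficient comparison — making the add-arc / remove-arc bijections weight- and root-preserving simultaneously, and handling the boundary indices $m=0$ and $m=n-\gamma$ where no arc can be added — rather than any single hard idea; the mechanism is a graded refinement of the unweighted-in-$\alpha$ arguments already given for Theorems~\ref{th1} and \ref{th2}, so I would present it as such and refer to \cite{chebo2006matrixforest} for the fully detailed linear-algebraic version.
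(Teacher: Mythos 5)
You should first note that the paper does not prove this statement at all: Theorem \eqref{aDla} is simply recalled from Chebotarev--Agaev \cite{chebo2006matrixforest}, so your verification argument fills a gap rather than paralleling an existing proof, and it is very much in the spirit of the paper's own verification proofs of Theorems \ref{th1} and \ref{th2} (a graded, coefficient-by-coefficient version of the same add-an-arc/exchange-an-arc bijections). Your reduction is the right one: after clearing the denominator and using invertibility of $I+\alpha\caL$ (Gershgorin puts the spectrum of $\caL$ in the closed right half-plane, so all eigenvalues of $I+\alpha\caL$ have real part at least $1$), everything rests on the identity
\begin{equation*}
w(\caF^{x \to y}_m) \;+\; \sum_z \caL_{x,z}\, w(\caF^{z \to y}_{m-1}) \;=\; \delta_{x,y}\, w(\caF_m), \qquad 1\le m\le n-\gamma,
\end{equation*}
together with the trivial $m=0$ case and the vanishing of the coefficient of $\alpha^{\,n-\gamma+1}$, i.e.\ $\sum_z \caL_{x,z}\, w(\caF^{z \to y}_{n-\gamma})=0$. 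The bijections you sketch do prove the displayed identity: for $x\neq y$, after cancelling the common forests in which both $x$ and $z$ lie in the $y$-tree, the pairs $(z,F)$ with $z$ in the $y$-tree, $x$ outside it and $x$ a root correspond weight-preservingly to $\caF^{x\to y}_m$ (add the arc $(x,z)$; invert by deleting the unique arc out of $x$), and they carry the opposite sign to the explicit term $w(\caF^{x\to y}_m)$, while the pairs with $x$ not a root cancel against the leftover of the diagonal term by the arc exchange of \fig\ref{f1}; for $x=y$, the leftover $\sum_z k(y,z)\,w(\{F\in\caF^{y}_{m-1}:\ z\ \text{not in the}\ y\text{-tree}\})$ equals the weight of all $m$-arc forests in which $y$ is \emph{not} a root (add or remove the unique arc out of $y$), which combined with $w(\caF^{y}_m)$ gives exactly $w(\caF_m)$. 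Two of your flagged worries can be discharged cleanly: (i) no auxiliary root-counting fact is needed, since in these oriented forests the root of each tree is structurally determined (the unique vertex of out-degree zero in its tree), so $w(\caF_m)$ is unambiguous and your bijections preserve roots automatically; (ii) the only genuinely delicate boundary term is the top coefficient, which for $\gamma=1$ vanishes because $w(\caF^{z\to y}_{n-1})=w(\caT_y)$ is independent of $z$ and the rows of $\caL$ sum to zero, and for general $\gamma$ vanishes because the corresponding bijection would produce a spanning rooted forest with $\gamma-1$ trees, impossible by the definition of the forest dimension. With these points made explicit, your plan is a complete and correct proof, and it has the advantage of being self-contained where the paper defers to the literature.
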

For the specific case when  $ \gamma=1$, 
\begin{align}\label{apDlaga1}
\bigg( \dfrac{1}{I + \alpha\, \cal  L}\bigg)_{x,y} =\frac{\sum_{m=0}^{n-2} \alpha^m w(\cal F^{x \to y}_m)}{\sum_{m=0}^{n-1}\alpha ^m w(\cal F_m)} + \frac{ \alpha^{n-1} w(\cal T_y)}{\sum_{m=0}^{n-1}\alpha ^m w(\cal F_m)}
\end{align}
where we have used $\cal F^{x \to y}_{n-1}=\cal T_y$, {\it i.e.}, the forest with $n-1$ edges is a spanning tree. $w(\cal T_y)$ is the weight of the set $\cal T_y$, $w(\cal T_y)=w(y).$\\

From \cite{chebo2002, chebofpt} the graphical representation for  the group inverse of $\cal L^{\#}$ is,
\begin{theorem}\label{lalpgroup}
\begin{equation}\label{apGla}
  \cal L^\#_{x,y}=\frac{1}{w(\cal F_{n-\gamma})}\big(w(\cal F_{n-\gamma-1}^{x\to y})-w(\cal F_{n-\gamma-1})\frac{w(\cal F_{n-\gamma}^{x\to y})}{w(\cal F_{n-\gamma})}\big)
\end{equation}
\end{theorem}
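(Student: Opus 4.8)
The plan is to extract the group inverse $\cal L^{\#}$ from the resolvent $(I+\alpha\,\cal L)^{-1}$, whose graphical form is already recorded in \eqref{aDla}, by analysing the behaviour as $\alpha\to\infty$. The first step is the spectral fact that for the Laplacian of a weighted digraph the eigenvalue $0$ is semisimple, with algebraic and geometric multiplicity both equal to the forest dimension $\gamma$, so that $\text{ind}\,\cal L\le 1$ and $\cal L^{\#}$ exists (and equals the Drazin inverse). Writing $\bbC^{\,n}=\ker\cal L\oplus\text{ran}\,\cal L$, letting $P=I-\cal L\,\cal L^{\#}$ be the associated spectral projection onto $\ker\cal L$, and diagonalising the splitting so that $\cal L$ acts as $0\oplus\Lambda$ with $\Lambda$ invertible, one has $(I+\alpha\,\cal L)^{-1}=P\oplus(I+\alpha\Lambda)^{-1}$; and since $(I+\alpha\Lambda)^{-1}=\alpha^{-1}\Lambda^{-1}+O(\alpha^{-2})$ for $\alpha$ large,
\[
(I+\alpha\,\cal L)^{-1}=P+\frac{1}{\alpha}\,\cal L^{\#}+O(\alpha^{-2}),\qquad\alpha\to\infty .
\]
In particular $P=\lim_{\alpha\to\infty}(I+\alpha\,\cal L)^{-1}$ and $\cal L^{\#}=\lim_{\alpha\to\infty}\alpha\big[(I+\alpha\,\cal L)^{-1}-P\big]$, which is the identity I would use.

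The second step is a computation. Substituting \eqref{aDla}, both the numerator $\sum_{m}\alpha^{m}w(\cal F^{x\to y}_{m})$ and the denominator $\sum_{m}\alpha^{m}w(\cal F_{m})$ are polynomials in $\alpha$ of degree at most $n-\gamma$, the denominator having degree exactly $n-\gamma$ because $w(\cal F_{n-\gamma})>0$ by the definition of the forest dimension; hence the projection is the ratio of the two leading coefficients, $P_{x,y}=w(\cal F^{x\to y}_{n-\gamma})/w(\cal F_{n-\gamma})$. It then remains to evaluate
\[
\cal L^{\#}_{x,y}=\lim_{\alpha\to\infty}\alpha\left[\frac{\sum_{m=0}^{n-\gamma}\alpha^{m}w(\cal F^{x\to y}_{m})}{\sum_{m=0}^{n-\gamma}\alpha^{m}w(\cal F_{m})}-\frac{w(\cal F^{x\to y}_{n-\gamma})}{w(\cal F_{n-\gamma})}\right].
\]
Putting the bracket over the common denominator $w(\cal F_{n-\gamma})\sum_{m}\alpha^{m}w(\cal F_{m})$, the terms of order $\alpha^{\,n-\gamma+1}$ in the numerator cancel identically; multiplying by $\alpha$, the surviving leading term is of order $\alpha^{\,n-\gamma}$ with coefficient $w(\cal F_{n-\gamma})\,w(\cal F^{x\to y}_{n-\gamma-1})-w(\cal F^{x\to y}_{n-\gamma})\,w(\cal F_{n-\gamma-1})$, against $\alpha^{\,n-\gamma}\,w(\cal F_{n-\gamma})^{2}$ from the denominator. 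Taking $\alpha\to\infty$ yields exactly \eqref{apGla}.

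The only genuinely delicate step is the spectral input above: that $0$ is a semisimple eigenvalue of $\cal L$ of multiplicity $\gamma$, equivalently that the resolvent has a simple (not higher-order) pole at $\alpha=\infty$, which is what makes $\cal L^{\#}$ well defined and fixes the $O(\alpha^{-1})$ coefficient of the expansion to be $\cal L^{\#}$. This may be cited from \cite{chebo2002,chebofpt}, or bootstrapped from \eqref{aDla} itself: the degree count ($\deg$ numerator $\le\deg$ denominator $=n-\gamma$) shows $(I+\alpha\,\cal L)^{-1}$ stays bounded as $\alpha\to\infty$, which already excludes Jordan blocks at $0$. Everything beyond that is the bookkeeping of leading coefficients of two polynomials in $\alpha$, the sole side condition being $w(\cal F_{n-\gamma})\neq 0$. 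An alternative route that sidesteps the spectral theory would be to verify directly that the right-hand side of \eqref{apGla} satisfies the three defining equations \eqref{G} of the group inverse, via deletion/insertion bijections among forests in the spirit of the proofs of Theorems \ref{th1} and \ref{th2}; but once \eqref{aDla} is in hand, the resolvent-limit argument is the shortest.
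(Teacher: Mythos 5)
Your proposal is correct, but it follows a genuinely different route from the paper, because the paper does not prove Theorem \ref{lalpgroup} at all: equation \eqref{apGla} is simply imported from the literature (``From \cite{chebo2002, chebofpt} the graphical representation for the group inverse \dots''), and the Appendix only specializes it to $\gamma=1$. What you do instead is derive \eqref{apGla} from the Matrix Forest Theorem \eqref{aDla} by the large-$\alpha$ resolvent expansion $(I+\alpha\,\mathcal{L})^{-1}=P+\alpha^{-1}\mathcal{L}^{\#}+O(\alpha^{-2})$, identifying $P$ with the ratio of leading coefficients and $\mathcal{L}^{\#}$ with the next order; the bookkeeping of the two polynomial leading terms indeed reproduces \eqref{apGla}, and the sign and normalization come out right since $w(\mathcal{F}_{n-\gamma})>0$ by the definition of the forest dimension. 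The one nontrivial input, $\operatorname{ind}\mathcal{L}\leq 1$ (semisimplicity of the zero eigenvalue, so that the group inverse exists and the resolvent has no higher-order growth), you correctly flag; your bootstrap is legitimate and non-circular, because \eqref{aDla} is quoted as an independent theorem valid for all $\alpha>0$, its degree count bounds $(I+\alpha\,\mathcal{L})^{-1}$ uniformly in $\alpha$, and boundedness rules out a nontrivial nilpotent part at $0$. What each approach buys: the paper's citation is the shortest path and defers the spectral subtleties to \cite{chebo2002,chebofpt}; your derivation makes the Appendix self-contained and is stylistically consonant with how the paper already uses the $\alpha\to\infty$ limit of \eqref{aDla}--\eqref{apDlaga1} in its alternative proof of Theorem \ref{th1}, while your suggested alternative (verifying the defining relations \eqref{G} by forest deletion/insertion bijections) would mirror the combinatorial proofs of Theorems \ref{th1} and \ref{th2} but is longer than the resolvent argument.
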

 If $\gamma=1$
\begin{align}\label{aGlaga1}
    \cal L^\#_{x,y}&=\frac{1}{w(\cal F_{n-1})}\big(w(\cal F^{x\to y})-w(\cal F)\, \frac{w(\cal F_{n-1}^{x\to y})}{w(\cal F_{n-1})}\big)\notag\\
    &=\frac{1}{\sum_y w(\cal T_y)}\big(w(\cal F^{x\to y})-w(\cal F)\,\frac{w(\cal T_y)}{\sum_y w(\cal T_y)}\big)\\
    &=\frac{1}{W}\big(w(\cal F^{x\to y})-w(\cal F)\,\rho^s(y)\big)
\end{align}
where we have used that the set of all rooted spanning forests with $n-1$ edges is indeed the set of all rooted spanning trees. $\cal F$  denotes the set of all rooted spanning forests consisting of two trees. In the last line we used the Kirchhoff formula.\\

The group inverse of the backward generator is denoted by  $ L^{\#}$ and from Theorem \ref{lalpgroup} the graphical representation is 
\begin{align}\label{Glaga1}
  L^\#_{x,y}=-\frac{w(\cal F^{x\to y})\,}{W}+ \rho(y)\, \,\frac{w( \cal F)}{W}
\end{align}

\subsection{Another proof of Theorem \ref{th1}}
We can solve  the equation $\eqref{prop}$ by utilizing a graphical representation of  the pseudoinverse of $L$, which can be obtained through the application of the Matrix Forest Theorem. 
\begin{proof}
Let us assume that $f$ is centered, meaning $\sum_yf(y)\, \rho^s(y)=0$. 
Use the graphical representation  \eqref{aDla},
\begin{align}
  V(x)&=\lim_{\alpha \to \infty}\sum_y   \bigg( \dfrac{\alpha}{I - \alpha\,  L}\bigg)_{x,y} \,f(y)\notag\\
  &=\lim_{\alpha \to \infty} \sum_y  \frac{\sum_{m=0}^{n-2} \alpha^{m+1} w(\cal F^{x \to y}_m)}{\sum_{m=0}^{n-1}\alpha ^m w(\cal F_m)} +\lim_{\alpha \to \infty} \sum_y\frac{ \alpha^{n} w(\cal T_y)}{\sum_{m=0}^{n-1}\alpha ^{m} w(\cal F_m)}f(y)\notag\\
  &=\sum_y \frac{  w(\cal F^{x \to y})}{W}\,f(y)
\end{align}
in the second line we use the Kirchhoff formula where $w(\cal F_{n-1})=W$ and apply the centered property of $f$ and the proof is finished.
 \end{proof}

As understood in \cite{drazin}, the resolvent inverse and the Drazin inverse of the backward generator on a centered function $f$ give the same solution to  the Poisson equation. The reason is that the index of the backward generator $L$ equals one; see \cite{chebo2006}. We check that explicitly in their graphical representations.
\begin{align*}
  V= - L^\#f\, (x)&=\sum_y\frac{w(\cal F^{x\to y})\, f(y)}{W}- \frac{w( \cal F)}{W} \sum_y\rho^s(y)\, f(y)\\
    &=\sum_y\,\frac{w(\cal F^{x\to y})}{W}\, f(y)
\end{align*}
which is equal to the solution of the Poisson equation via the resolvent inverse; see $\eqref {V}$.  Hence, if   $f$ is centered, then the resolvent inverse, Drazin inverse and group inverse of $L$ are all equal giving $V\,=-\,L^{-1} \, f$.
\subsection{Another proof of Theorem \ref{th2}}
The equation in \eqref{taup} can be solved by the graphical representation of the group inverse of the backward generator.
\begin{theorem}
    The solution of $L\tau +1=0$ is   
  \begin{equation}\label{mftg}
   \tau=(L^\#-{\mathbb 1} \, L^\#_{dg})D 
  \end{equation}
where $\mathbb{1}$ is a $n \times n$ matrix such that  all elements are $1$ and  $D$ is a diagonal $n \times n$ matrix such that $D_{xx}= \frac{1}{\rho(x)}$. $L^\#_{dg}$ is a matrix made by putting all the entries outside of the main diagonal of  $L^\#$ equal to  zero. 
\end{theorem}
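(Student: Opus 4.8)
The plan is to lift the scalar Poisson equation \eqref{taup} to a single $n\times n$ matrix equation and then invert $L$ on it using the group inverse $L^\#$, whose graphical form \eqref{Glaga1} is already available. First I would record the global identity satisfied by the mean first-passage matrix $\tau=(\tau(x,z))$. From the computation in the proof of Proposition~\ref{pro1} one has, for \emph{every} pair $x,z$, the relation $(L\tau)_{x,z}=\delta_{x,z}/\rho^s(z)-1$, i.e.
\[
L\,\tau = D-\mathbb{1},
\]
where $\mathbb{1}$ is the all-ones matrix and $D_{zz}=1/\rho^s(z)$. Indeed the off-diagonal entries ($x\neq z$) are precisely \eqref{taup}, while the diagonal entry is the identity $L\,\frS_{K\setminus\{z\}}=g_z-\langle g_z\rangle^s$ with $g_z=\delta_{\cdot,z}/\rho^s(z)$ established in Section~\ref{poisson}; note also $\rho^s(D-\mathbb{1})=0$, consistently with $D-\mathbb{1}$ lying in the range of $L$.

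Next I would invoke two facts about the group inverse of $L=-\cal L$ for an irreducible chain, both readable off \eqref{Glaga1}: the product $L^\#L=LL^\#=I-\mathbf 1\,\rho^s$ is the projection onto the range of $L$ along $\ker L=\bbR\,\mathbf 1$ (here $\mathbf 1$ is the column of ones, $\rho^s$ the stationary row vector with $\rho^s\mathbf 1=1$), and $L^\#\mathbf 1=0$, hence $L^\#\mathbb{1}=0$. Left-multiplying $L\tau=D-\mathbb{1}$ by $L^\#$ then yields $\tau-\mathbf 1\,(\rho^s\tau)=L^\#(D-\mathbb{1})=L^\#D$, i.e.
\[
\tau = L^\#D + \mathbf 1\,(\rho^s\tau).
\]
The only remaining unknown is the row vector $\rho^s\tau$, and this is where the normalization $\tau(z,z)=0$ enters: reading the $(z,z)$ entry of the last display gives $L^\#_{zz}/\rho^s(z)+(\rho^s\tau)_z=0$, so $(\rho^s\tau)_z=-L^\#_{zz}/\rho^s(z)$. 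Since $L^\#_{dg}D$ is the diagonal matrix with entries $L^\#_{yy}/\rho^s(y)$, the matrix $\mathbb{1}\,L^\#_{dg}D$ has constant columns with $(x,z)$ entry $L^\#_{zz}/\rho^s(z)$; hence $\mathbf 1\,(\rho^s\tau)=-\mathbb{1}\,L^\#_{dg}D$, and therefore $\tau=(L^\#-\mathbb{1}\,L^\#_{dg})D$, as claimed. Uniqueness is automatic, since \eqref{taup} together with $\tau(z,z)=0$ already determines $\tau$.

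A quicker, self-contained alternative is plain verification: setting $\tau:=(L^\#-\mathbb{1}\,L^\#_{dg})D$, the identity $L\mathbb{1}=0$ (rows of $L$ sum to zero) annihilates the second term of $L\tau$, leaving $L\tau=LL^\#D=(I-\mathbf 1\rho^s)D=D-\mathbf 1(\rho^s D)=D-\mathbb{1}$, whose off-diagonal entries are exactly \eqref{taup}; and the $(z,z)$ entries of $L^\#D$ and of $\mathbb{1}\,L^\#_{dg}D$ coincide, so $\tau(z,z)=0$. I expect the only genuine pitfalls to be (i) writing down the correct global matrix identity, diagonal included, rather than merely the off-diagonal equation \eqref{taup}, and (ii) keeping track that exactly one scalar condition per column, $\tau(z,z)=0$, is what fixes the $\ker L$ ambiguity $\mathbf 1\,c^{\top}$; everything else is non-commutative bookkeeping with the (generally non-symmetric) matrices $L$, $L^\#$, $D$.
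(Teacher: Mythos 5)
Your proof is correct, and it takes a genuinely different route from the paper. The paper verifies the formula entrywise: it computes $(L\,L^\#D)_{xz}$, inserts the graphical representation \eqref{Glaga1} of $L^\#$, and then reruns the forest-surgery argument of Theorem \ref{th2} (adding/removing edges at $x$) to show the off-diagonal entries equal $-1$. You instead work purely at the level of group-inverse algebra: the global identity $L\tau=D-\mathbb{1}$ (diagonal included), the standard facts $LL^\#=L^\#L=I-\mathbf{1}\rho^s$, $L^\#\mathbf{1}=0$ and $L\mathbb{1}=0$, plus $\rho^s D=\mathbf{1}^\top$, after which the normalization $\tau(z,z)=0$ pins down the $\ker L$ ambiguity column by column; your one-line verification variant is equally sound. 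What each approach buys: yours is shorter, needs no combinatorics, makes explicit how the boundary condition selects the particular solution, and derives the formula rather than merely checking it; the paper's verification keeps everything inside the graphical framework, so it simultaneously reconnects \eqref{mftg} to forest weights and lets \eqref{fpt} be read off as a corollary. One remark worth keeping: your entrywise reading $\tau(x,z)=\bigl(L^\#_{xz}-L^\#_{zz}\bigr)/\rho^s(z)$ has the correct sign (check a two-state chain with rates $a,b$: it gives $\tau(1,2)=1/a$), whereas the paper's closing display writes $\bigl(L^\#_{zz}-L^\#_{xz}\bigr)/\rho^s(z)$, a sign slip that is compensated by a second slip when \eqref{Glaga1} is inserted there; the theorem itself, and your derivation of it, are consistent.
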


We start by
\[
L\, \tau = L \,(L^\#-\mathbb{1} \, L^\#_{dg})\,D =L\,L^\# D\]
where $L\mathbb{1} = 0$ is the  $n \times n$ matrix with all entries equal to zero. Furthermore,
\begin{align*}
  (L\,L^\#\, D)_{xz}&= (L\,L^\#\,) _{xz}\frac{1}{\rho^s(z)}\\
  &=\frac{1}{\rho^s(z)}\sum_y L_{xy} L^\#_{yz} =\frac{1}{\rho^s(z)}(\sum_{y\not= x}k(x,y)L^\#_{yz}-L^\#_{xz}\sum_yk(x,y))\\
 &= \frac{1}{\rho^s(z)}\sum_y k(x,y)(L^\#_{yz}-L^\#_{xz})
 \end{align*}
 \begin{align}\label{inter}
(L\,L^\#\, D)_{xz}&=-\frac{1}{\rho^s(z)}\sum_y k(x,y)\frac{1}{W}\bigg(\big(w(y\to z)-\rho^s(z)\sum_{z,y}w(y\to z)\big)\notag\\
&\qquad \qquad \qquad \qquad \qquad-\big(w(x\to z)-\rho^s(z)\sum_{x,z}w(x\to z)\big)\bigg)\\
&=-\frac{1}{w(z)}\sum_y k(x,y)\bigg(w(y\to z)-w(x\to z)\bigg)
\end{align}
Fix $x$ and $z$, consider two sets $\cal F^{y\to z}$ and $\cal F^{x\to z}$. For the fix $y$,  the intersection of them is a set of all forests such that $x$ and $y$ are in the same tree rooted at $z$. As the result, in \eqref{inter}, we only consider  the component of  the set $\cal F^{y\to z}$ where   $x$ is  not in the same tree with $y$ and $z$, and in the set  $\cal F^{x\to z}$ only the components where $z$ is not in a same tree with $x$ and $z$. Hence, 
\[
w(y\to z)-w(x\to z)=w(\cal F^{x,\,y\to z})-w(\cal F^{y,\,x\to z})
\]
where $ \cal F^{x,\,y\to z}$ is set of all forests such that $z$ is on the tree rooted in $y$ and $x$ is located on the another tree.\\
Take a forest from the set $\cal F^{x,\, y\to z}$.  If $x$ is the root, then adding edge the $(x,y)$ connects two trees such that a new spanning tree rooted in $z$ is created, see \fig \ref{f3}. Now consider a spanning tree rooted at $z$, by removing the edge goes out from $x$  a forest in the set $\cal F^{x,\,\,z}$ is created. \\
Take a forest from the set $F^{x,y\to z}\in \cal F^{x,y\to z}$ such that $x$ is not the root. If $x$ is not the root, then it has  a neighbor such as $u$. Add the edge $(x,y)$ to $F^{x,y\to z}$, a directed graphical object is created. The new graphical object is  created also by adding edge $(x,y')$ to a forest from the set $\cal F^{y',x\to z}$, see \fig \ref{f1}. The same graphical objects have equal weights and as a result 
   \[ \sum_y k(x,y)\big( w(\cal F^{x,\,y\to z})-w(\cal F^{y,\,x\to z})\big)=w(z)\]
   and 
\begin{equation}
(L\,L^\#\, D)_{xz}=-1 \qquad z\not =x
\end{equation}
It follows $L \tau=-1$ for every $z\not =x$, which ends the proof of \eqref{mftg}.\\
We can get the graphical representation \eqref{fpt} from  \eqref{mftg}:
\[\tau (x,z)=\begin{cases}
        0 &x=z\\
        \frac{L^\#_{zz}-L^\#_{xz}}{\rho^s(z)}& x\not=z
    \end{cases}\]
   where 
   \[L^\#_{zz}-L^\#_{xz}= \frac{1}{W}\big(w(z \to z)-w(x\to z)\big)=\frac{w(x,z)}{W}\]
   Hence. we get indeed
   \begin{equation*}
    \tau (x,z)=\frac{w(x,z)}{w(z)} \quad x\neq z;\qquad \tau (x,x)=0
\end{equation*}

\bibliographystyle{unsrt}  
\bibliography{chr}
\onecolumngrid

\end{document}